\newtheorem{theorem}{Theorem}[section]
\newtheorem{thm}[theorem]{Theorem}
\newtheorem{lem}[theorem]{Lemma}
\theoremstyle{definition}
\newtheorem{example}[theorem]{Example}
\newtheorem{conj}[theorem]{Conjecture}
\theoremstyle{remark}
\numberwithin{equation}{section}
\begin{document}
\title[Supercharacter Theories of a  Finite Group]{An Algorithm for Constructing All Supercharacter Theories of a  Finite Group}

\author[A. R. Ashrafi]{A. R. Ashrafi}

\address{Department of Pure Mathematics, Faculty of Mathematical Sciences, University of Kashan, Kashan 87317$-$53153, I. R. Iran}
\curraddr{}
\email{ashrafi@kashanu.ac.ir}

\thanks{The first author is supported by the University of Kashan under grant number 785149/7.}

\author[L. Ghanbari-Maman]{L. Ghanbari-Maman}
\address{Department of Computer Science, Faculty of Mathematical Sciences,
University of Kashan, Kashan 87317$-$53153, I. R. Iran}
\curraddr{Department of Bioinformatics, Institute of Biochemistry and Biophysics, University of Tehran, Tehran, Iran}
\email{leila.ghanbari@ut.ac.ir}
\thanks{}
\author[K. Kavousi]{K. Kavousi}
\address{Department of Bioinformatics, Institute of Biochemistry and Biophysics, University of Tehran, Tehran, Iran}
\curraddr{}
\email{kkavousi@ut.ac.ir}
\thanks{}

\author[F. Koorepazan-Moftakhar]{F. Koorepazan-Moftakhar}
\address{Department of Pure Mathematics, Faculty of Mathematical Sciences, University of Kashan, Kashan 87317$-$53153, I. R. Iran}
\curraddr{Department of Mathematical Sciences, Sharif University of Technology, Azadi Street, P.O. Box 11155-9415, Tehran, Iran}
\email{f.moftakhar@sharif.edu}
\thanks{}
\subjclass[2010]{Primary $20C15$; Secondary: $20D15$}
\thanks{\textit{Keywords and phrases.} Supercharacter theory, superclass, conjugacy class, irreducible character.}
\date{}

\dedicatory{}

\begin{abstract}
In 2008, Diaconis and Isaacs introduced the notion of a supercharacter theory of a finite group in which supercharacters replace with irreducible characters and superclasses by conjugacy classes. In this paper, we introduce an algorithm for constructing supercharacter theories of a finite group by which all supercharacter theories of groups containing up to $14$ conjugacy classes are calculated. 
\end{abstract}

\maketitle

\section{Introduction}
Suppose $UT_{n}(q)$ denotes the set of all $n \times n$ unipotent upper-triangular matrices over the finite field $GF(q)$. While working on the complex characters of this group, Andr\'{e} constructed something nowadays called a supercharacter theory\cite{2,3,4}. Diaconis and Isaacs in their seminal paper\cite{6},  axiomatized the notion of supercharacter theories of finite groups. To define, we assume that $G$ is a finite group, $Irr(G)$ denotes the set of all ordinary irreducible characters of $G$ and  $Con(G)$
is the set of all conjugacy classes of $G$. A pair $(\mathcal{X}, \mathcal{K})$ is a supercharacter theory of $G$ if the following conditions hold:
\begin{enumerate}
\item  $ \mathcal{X}$ and $\mathcal{K}$ are set partitions of $Irr(G)$ and $Con(G)$, respectively;
\item  $ \mathcal{K} $ contains $\{e\}$, where $e$ denotes the identity element of $G$;
\item  $|\mathcal{X}| = |\mathcal{K}|$;
\item  For every $ X \in \mathcal{X}$, the characters $\sigma_{X} = \sum_{\chi \in X}\chi(e)\chi$ are constant on each $K \in \mathcal{K}$.
\end{enumerate}
The characters $\sigma_{X}$ are called \textit{supercharacters}, and the members of $\mathcal{K}$ are called \textit{superclasses} of $G$\cite{6}. Throughout this paper, $Sup(G)$ denotes the set of all supercharacter theories of $G$. Now, let $\mathcal{X}$ = $\{ \{1_G\},  Irr(G) \setminus \{ 1_G\}\}$ and $\mathcal{K}$ = $\{\{e\},Con(G) \setminus\{ e\}\}$, then $m(G) = (Irr(G),Con(G))$ and
$M(G) = (\mathcal{X}, \mathcal{K})$ are the trivial supercharacter theories of $G$.

We now review some constructive results on supercharacter theories of finite groups. Hendrickson \cite{7} provided several constructions which are used to classify all supercharacter theories of cyclic groups and obtained an exact formula for the number of supercharacter theories of a finite cyclic $p$-group. By studying partitions of the set of irreducible characters,  Clifford theory and some well-known results regarding the structure of simple rational groups, Burkett et al.\cite{5} proved that  there are only three groups with exactly two supercharacter theories: the cyclic group $Z_3$, the symmetric group $S_3$ which is solvable, and the non-abelian simple group $Sp(6, 2)$.  Furthermore, Wynn\cite{14} described all supercharacter theories of extraspecial and Frobenius groups. The number of supercharacter theories of dihedral groups of order $2p$, $p$ is a Mersenne prime, was also calculated. In particular, he proved that if $G$ is a Frobenius group of order $pq$, where $p, q$ are primes and $p > q$, then $G$ has exactly $1 + \tau(\frac{p-1}{q})\tau(q-1)$ supercharacter theories in which $\tau(n)$ denotes the number of positive divisors of $n$. In \cite{1} the authors continued these works by providing some constructive methods in order to find new supercharacter theories. Then, they applied these methods to classify finite simple groups with exactly three or four supercharacter theories.

The aim of this paper is to present an algorithm for constructing all supercharacter theories of finite groups. To explain and then evaluate our algorithm, we need some concepts in computer science. The time complexity of a program with a given input data of size $n$ is defined as the number of elementary instructions that this program executes as a function of $n$. Moreover, the space complexity of a program with a given input data of size $n$ is defined as the number of elementary objects that this program needs to store during its execution with respect to $n$. Following Cormen et al.\cite{55}, we define:
\begin{center}
$\Theta(g(n))$ $=$ $\{f(n) \mid \ \exists \ c_1, c_2,  n_0 > 0 \ s.t. \ \forall ~n \geq n_0; 0 \leq c_1g(n) \leq f(n) \leq c_2g(n)\}.$ 
\end{center}
It can be seen that $f(n) \in \Theta(g(n))$ if there exist positive constants $c_1$ and $c_2$ in such a way that $c_1g(n)\leq f(n) \leq c_2g(n)$, for sufficiently large $n$. We use the notation $f(n) = \Theta(g(n))$ instead of $f(n) \in \Theta(g(n))$. Moreover, $f(n) \in O(g(n))$ if there are $c, n_0 > 0$ such that $f(n) \leq cg(n)$ whenever $n \geq n_0$, and $O(g(n))$ $=$ $\{f(n) \mid \exists ~c, n_0 >0; \forall ~n \geq n_0; 0 \leq f(n) \leq cg(n)\}.$ For two matrices $A$ and $B$ with the same number of rows, the augmented matrix $C= [A|B]$ is formed by appending the columns of $B$ to $A$.

Throughout this paper, our calculations are done with the aid of GAP\cite{10}. Our group theory notations and terminologies can be found in \cite{8,St}. Moreover, we refer the interested readers to the book\cite{55} for more information on algorithms.

\section{Algorithm}
Set $[n] = \{1, 2, \dots, n\}$, $G$ is a finite group, $Irr(G) = \{{\chi}_1,\ldots,{\chi}_n\}$ and $Con(G) = \{K_1,\ldots,K_n\}$. Choose $A \subseteq [n]$. Define $A_{I}(G)$ $=$  $\{{\chi}_i \mid i \in A\}$ and $A_{C}(G)$ $=$ $\{K_i \mid i \in A\}$. Then the mappings $\xi_1: \mathcal{P}([n]) \longrightarrow \mathcal{P}(Irr(G))$ and $\xi_2: \mathcal{P}([n]) \longrightarrow \mathcal{P}(Con(G))$ are given by $\xi_1(A) = A_I(G)$ and $\xi_2(A) = A_C(G)$, where $\mathcal{P}(Y)$ denotes the power set of a given set $Y$. Conversely, we assume that $B \subseteq Irr(G)$,  $C \subseteq Con(G)$ and define $[n]_B$ $=$ $\{i \mid \chi_i \in B\}$ and $[n]^C$ $=$ $\{j \mid K_j \in C\}$. We now define $\gamma_1 : \mathcal{P}(Irr(G)) \longrightarrow \mathcal{P}([n])$ and $\gamma_2 : \mathcal{P}(Con(G)) \longrightarrow \mathcal{P}([n])$ by $\gamma_1(B) = [n]_B$ and $\gamma_2(C) = [n]^C$. Then the mapping $\xi_t$ and $\gamma_t$, $t = 1, 2$, are mutually inverse and so we can use $\mathcal{P}([n])$ instead of both $\mathcal{P}(Con(G))$ and $\mathcal{P}(Irr(G))$ in our algorithms.  

Let  $X = \{\chi_1,\ldots, \chi_u\}$ and $K = \{K_1,\ldots, K_s\}$ be parts of set partitions $\mathcal{X}$ of $Irr(G)$ and $\mathcal{K}$ of $Con(G)$, respectively. If $\sigma_X(K_1) = \cdots = \sigma_X(K_s)$, then we say that $X$ and $K$ are \textit{consistent}. If all parts of $\mathcal{X}$ are mutually consistent with all parts of $\mathcal{K}$, then the set partitions $\mathcal{X}$ and $\mathcal{K}$ are said to be consistent. In a part of the proof of \cite[Theorem 2.2(c)]{6}, the following equivalence relation on $G$ is given:
$$u \sim v \Longleftrightarrow \forall ~X \in \mathcal{X},~ \sigma_X(u) = \sigma_X(v).$$

Note that if $u$ and $v$ are conjugate in $G$, then $u \sim v$. As a result, it is enough to compute $\sigma_{X}$ on all conjugacy classes of $G$. Suppose $I = \{X_1, \ldots, X_r\}$ is a set partition of $Irr(G)$ and define $\sigma_i = \sigma_{X_i}$, $1 \leq i \leq r$. Set $K_x = \{y \mid \forall i, 1 \leq i \leq r; \sigma_i(x) = \sigma_i(y)\}$, $x \in G$, and let $J$ be the set of all such subsets.
If all members of $J$ are non-empty, then $J$ is a set partition of $Con(G)$ consistent with $I$ and so $(I,J) \in Sup(G)$. It is not necessarily true that for each set partition $\mathcal{X}$ of irreducible characters there exists a set partition $\mathcal{K}$ of conjugacy classes such that $\mathcal{X}$ and $\mathcal{K}$ are consistent. Hence the problem of computing supercharacter theories of $G$ is reduced to the problem of computing all consistent pairs for $G$.

Now, we introduce some notations in order to work with supercharacter theories of a group $G$ in GAP. The notation 
$\sigma_X(i)$ denotes the image of $\sigma_X$ in the $i$-th conjugacy class of $G$. 

The aim of this section is to present an algorithm for constructing all supercharacter theories of a finite group $G$. We partition this algorithm  into three sub-algorithms. These sub-algorithms are presented in three sub-sections. In the first sub-section, a sub-algorithm for finding the set of all bad parts is provided. The second sub-section devotes to calculating all set partitions of an $n$-element set such that these set partitions do not have any bad part. In the last sub-section, a sub-algorithm for computing a consistent set partition of the conjugacy classes of a group $G$ with respect to a set partition of  $Irr(G)$ is given. 
In what follows, we provide their pseudocode in different sub-sections. 

\subsection{Bad Parts and Bad Set Partitions}
A part $X$ of a set partition $\mathcal{X}$ of $Irr(G)$ is said to be \textit{bad} if $X$ is consistent with only singleton subsets of $Con(G)$. A set partition containing a bad part is called a \textit{bad set partition}. It is easy to see that a bad set partition  $\mathcal{X}$ of $Irr(G)$ does not have a mate $\mathcal{K}$ such that $(\mathcal{X}, \mathcal{K}) \in Sup(G)$. In some cases like the cyclic groups of orders 11 and 13 and also the dihedral groups of orders $38$ and $46$, more than $\%96$ of all parts are bad. In such cases,  the running time decreases significantly by removing bad set partitions from the calculations.

We recall that in computing supercharacter theories of a finite group $G$, $\{e\}$ and $\{1_G\}$ are always parts of $\mathcal{K}$ and $\mathcal{X}$, respectively. As a result, it is enough to work with set partitions of $[n]^{\star}=\{2,\ldots, n\}$.

\begin{lem}\label{lem}
Suppose $X \in \mathcal{P}([n]^{\star})$ is a bad part. Then all values of $\sigma_{X}(i)$, $2 \leq i \leq n$, are distinct.
\end{lem}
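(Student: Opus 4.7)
The plan is to argue by contrapositive, reading the conclusion directly out of the definition of a bad part. A bad part $X$ is one for which no subset of $Con(G)$ of size at least two can serve as a part of a compatible partition $\mathcal{K}$; since $\{e\}$ is automatically its own part, the relevant subsets are those contained in $\{K_{2},\ldots,K_{n}\}$. So badness asserts: every subset $K\subseteq\{K_{2},\ldots,K_{n}\}$ on which $\sigma_{X}$ is constant must be a singleton. The lemma is just the restatement of this in terms of the values $\sigma_{X}(i)$ for $2\le i\le n$.

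Concretely, I would suppose for contradiction that $X$ is a bad part but there exist indices $i,j$ with $2\le i<j\le n$ and $\sigma_{X}(i)=\sigma_{X}(j)$. Then the two-element subset $K=\{K_{i},K_{j}\}$ of $Con(G)$ satisfies $\sigma_{X}(K_{i})=\sigma_{X}(K_{j})$, so by the definition of consistency $X$ is consistent with $K$. Since $|K|=2$, this exhibits a non-singleton subset of $Con(G)$ consistent with $X$, directly contradicting the assumption that $X$ is bad. Hence the values $\sigma_{X}(2),\ldots,\sigma_{X}(n)$ must all be distinct.

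No real obstacle is anticipated: the argument is a one-line unpacking of the definitions. The only point that needs a brief mention is why one does not also need $\sigma_{X}(1)$ to be different from the other values---this is because the identity class $\{e\}$ is forced to be a singleton part in every $\mathcal{K}$, so the possible coincidence $\sigma_{X}(1)=\sigma_{X}(i)$ for some $i\ge 2$ never produces a non-singleton part to test consistency against, and therefore does not interfere with the definition of badness on $[n]^{\star}$.
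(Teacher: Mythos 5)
Your argument is correct and is essentially the paper's own proof: both assume two indices $j\neq k$ in $[n]^{\star}$ with $\sigma_{X}(j)=\sigma_{X}(k)$ and observe that $X$ is then consistent with the non-singleton set $\{j,k\}$, contradicting the definition of a bad part. Your closing remark about why the identity class is excluded is a harmless clarification not present in (and not needed for) the paper's version.
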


\begin{proof}
Choose $2 \leq j \neq k \leq n$ such that $\sigma_{X}(j) = \sigma_{X}(k)$. Then the part $X$ is consistent with $\{j\}$, $\{k\}$ and $\{j, k\}$. This is a contradiction to the definition of a bad part.
\end{proof}

By Lemma \ref{lem}, to check whether a part $X \in \mathcal{P}([n]^{\star})$ is bad or not, it is enough to compute $\sigma_X(i)$ for $i \in [n]^{\star}$. If all values are different, then $X$ is a bad part. The list of all bad parts can be computed by the following pseudocode. We use the command ``FindBadParts(G)" to call it.\\

\begin{tabular}{l}
\hline
\textbf{Sub-Algorithm 1} Find Bad Parts \\
\hline
\hspace*{.3cm}\textbf{Input:} A given group G \hspace*{7cm} \\
\hspace*{.3cm}\textbf{Output:} BadParts, list of all bad parts of G\\
\hspace*{.3cm}FindBadParts(G)\\
\hspace*{1.1cm} t  := Sorted character table of G\\
\hspace*{1.1cm} n := The number of conjugacy classes of G\\
\hspace*{1.1cm} BadParts :=[ ]\\
\hspace*{1.1cm}$AllParts := \mathcal{P}([n]^{\star})$\\
\hspace*{1.1cm}\textbf{for} each part in AllParts \textbf{do}\\
\hspace*{1.6cm}R := An empty set \\
\hspace*{1.6cm}\textbf{for} each c in $[n]^{\star}$ \textbf{do}\\
\hspace*{2.3cm}$R := R \cup \sigma_{part}(c)$\\
\hspace*{1.6cm}\textbf{end for}\\
\hspace*{1.6cm}\textbf{if} $|R| = n-1$ \textbf{then}\\
\hspace*{2.3cm}$BadParts := BadParts \cup \{part\}$\\
\hspace*{1.6cm}\textbf{end if}\\
\hspace*{1.1cm}\textbf{end for}\\
\hspace*{.9cm} \textbf{return} BadParts\\
\hspace*{.3cm}\textbf{end}\\
\hline
\end{tabular}

\subsection{Create Set Partitions}
A simple calculation by GAP shows that there are $82864869804$ set partitions for the case $n = 17$. When we run the command ${\rm PartitionsSet([1..17])}$, the following message appears: Error, reached the pre-set memory limit.  The GAP command
${\rm PartitionsSet([1..n])}$ generates all set partitions of $[n]$ and save them on the memory of the computer. Here, it is useful to mention that GAP has another command ${\rm IteratorOfCombinations(}$ $\rm{[1..n], i)}$ that does not need to store all elements of the collection under investigation. Since this command is time consuming, it is not efficient enough for our purpose. To solve this problem, we design a new algorithm for generating set partitions without saving them on RAM.

In literature, there are two algorithms by Semba\cite{semba} and Er\cite{er} for computing set partitions of $[n]$. 
The Semba's algorithm which is based on the backtrack technique
\cite[Theorem 1]{semba} has the time complexity of $\Theta(4B(n))$, where the Bell number $B(n)$ is defined as the number of set partitions of an $n$-element set. The Er's algorithm is recursive. He claimed (without proof) that $\sum_{i=1}^nB(i) < 1.6B(n)$. This is while Nayak and Stojmenovi\'{c}\cite[p. 12]{99} proved that $\sum_{i=2}^nB(i) < 2B(n)$.  In an exact phrase, Er claimed that the time complexity of his  algorithm for generating all set partitions of $[n]$ is $\Theta(1.6B(n))$.

We now explain the Er's algorithm. Choose a set partition $P = \{\pi_1, \pi_2, \ldots, \pi_k\}$ of $[n]$. Define the codeword $c(P) = c_1c_2\ldots c_n$ such that $1 \leq c_i \leq i$ and $c_i = j$ if and only if $i \in \pi_j$. It is easy to see that there exists a one-to-one correspondence between the set of all set partitions and the set of all such codewords. In Er's algorithm, codewords are computed with the given property as the set partitions of $[n]$. 

There exists a limitation in the Er's algorithm: All codewords are determined at the end step and so we cannot identify whether a given set partition is bad or not, before it is done completely. As a consequence, we design an algorithm which generates set partitions part by part. When a bad part occurs, calculations of all set partitions containing that part are pruned. 

To explain our algorithm, we set $S = \{ s_1, s_2, \ldots, s_n\}$. The $2^n - 1$ non-empty subsets of $S$ are used as the parts of the set partitions of $S$. Note that when $n$ is large enough, it is not possible to save all set partitions on the memory. In order to save the memory, we use the integers of the closed interval $I = I(S) = [1,2^{|S|}-1]$. 
 In fact, we define a one-to-one correspondence $\alpha_S: I \longrightarrow \mathcal{P}(S) \setminus \{\emptyset\}$  by $\alpha_S(k) = \{ s_i \mid a_{n+1-i} = 1\}$, where  $a_1a_2\ldots a_n$ is the $n$-bit binary form of $k$. For example, if $S = \{ s_1, s_2, s_3, s_4, s_5\}$ and $k=13$ then  $(13)_2 = 1101$ and since $|S| = 5$, the $5$-bit binary form of $13$ is $01101$. Thus, $\alpha_S(13) = \{ s_1, s_3, s_4\}$. Let $I_O = I_O(S)$ be the set of all odd integers in the closed interval $I$. Then $\alpha_S(I_O)$ is the set of all subsets of $S$ containing $s_1$. On the other hand, if $F$ is a non-empty subset of $S$, then we conclude that $\alpha_S^{-1}(F)$ $=$ $\sum_{i \in F}2^{Position(S,i) - 1}$,  where   $Position(S,i)$, $i \in F$, is the position of $i$ in $S$. In this example, if $F = \{ s_1, s_3, s_4\}$ then $\alpha_S^{-1}(F)$ $=$ $2^{1 - 1} + 2^{3 - 1} + 2^{4 - 1} = 13$.

Now, we are ready to present our algorithm for constructing all set partitions with no bad part. We use two lists $SPs$ and $RE$ in order to keep set partitions and remaining elements, respectively. At the first step, we have $SPs = \emptyset$ and $RE = [n]^{\star}$. We fill $SPs$ by parts constructed from the elements of $RE$ such that these parts are not bad. In other words,
$RE := RE \setminus \alpha_{RE}(k)$ and $SPs := SPs \cup \{ \alpha_{RE}(k) \}$, where  $k \in I_O$ and $\sigma_{RE}(k)$ is not a bad part. This algorithm will be returned to the previous step, when $RE = \emptyset$. Since our algorithm is recursive, it's generating tree is constructed by DFS strategy.

In the Sub-algorithm 2, we present a pseudocode for a part of our main algorithm. In this sub-algorithm which is called by the command CreateSetPartitions(RE, BP), all set partitions of $[n]^{\star}$ that do not  contain any bad part are generated. If we remove the condition $\sigma_{RE}(k) \notin BP$ from the algorithm and replace the command ``CreateKappa(SPs)" by ``Print(SPs)", then the new algorithm can generate all set partitions with no pruning.

\vskip 3mm

\begin{tabular}{l}
\hline
\textbf{Sub-Algorithm 2} Create Set Partitions Based on Filtering Bad Parts\\
\hline
\hspace*{.3cm}\textbf{Input:} RE, A set of numbers; BP, the list of bad parts \hspace*{2cm}\\
\hspace*{.3cm}\textbf{Output:} All set partitions of RE without any bad part\\
\hspace*{.3cm}SPs := A set to keep each set patition; at first it equals to the empty set\\
\hspace*{.3cm}CreateSetPartitions(RE, BP)\\
\hspace*{.5cm}\textbf{if} RE = $\emptyset$ \textbf{then}\\
\hspace*{1cm}CreateKappa(SPs)\\
\hspace*{.5cm}\textbf{else}\\
\hspace*{1cm}\textbf{for} each $k \in I_O(RE)$ \textbf{do}\\
\hspace*{1.7cm} \textbf{if} $\sigma_{RE}(k) \notin BP$ \textbf{then}\\
\hspace*{2.3cm}$newRE := RE \setminus \alpha_{RE}(k)$\\
\hspace*{2.3cm}$SPs := SPs \cup \{\alpha_{RE}(k)\}$\\
\hspace*{2.3cm}CreateSetPartitions(newRE, BP)\\
\hspace*{2.3cm}$SPs := SPs \setminus \{\alpha_{RE}(k)\}$\\
\hspace*{1.7cm}\textbf{end if}\\
\hspace*{1cm}\textbf{end for}\\
\hspace*{.5cm}\textbf{ end if}\\
\hspace*{.3cm}\textbf{end}\\
\hline
\end{tabular}

\vskip 3mm

In Figure \ref{re4}, an example of a generating tree for set partitions of $[4]^{\star}$ is presented. 	

\begin{figure}[htp]
\centering
\includegraphics[scale=.75]{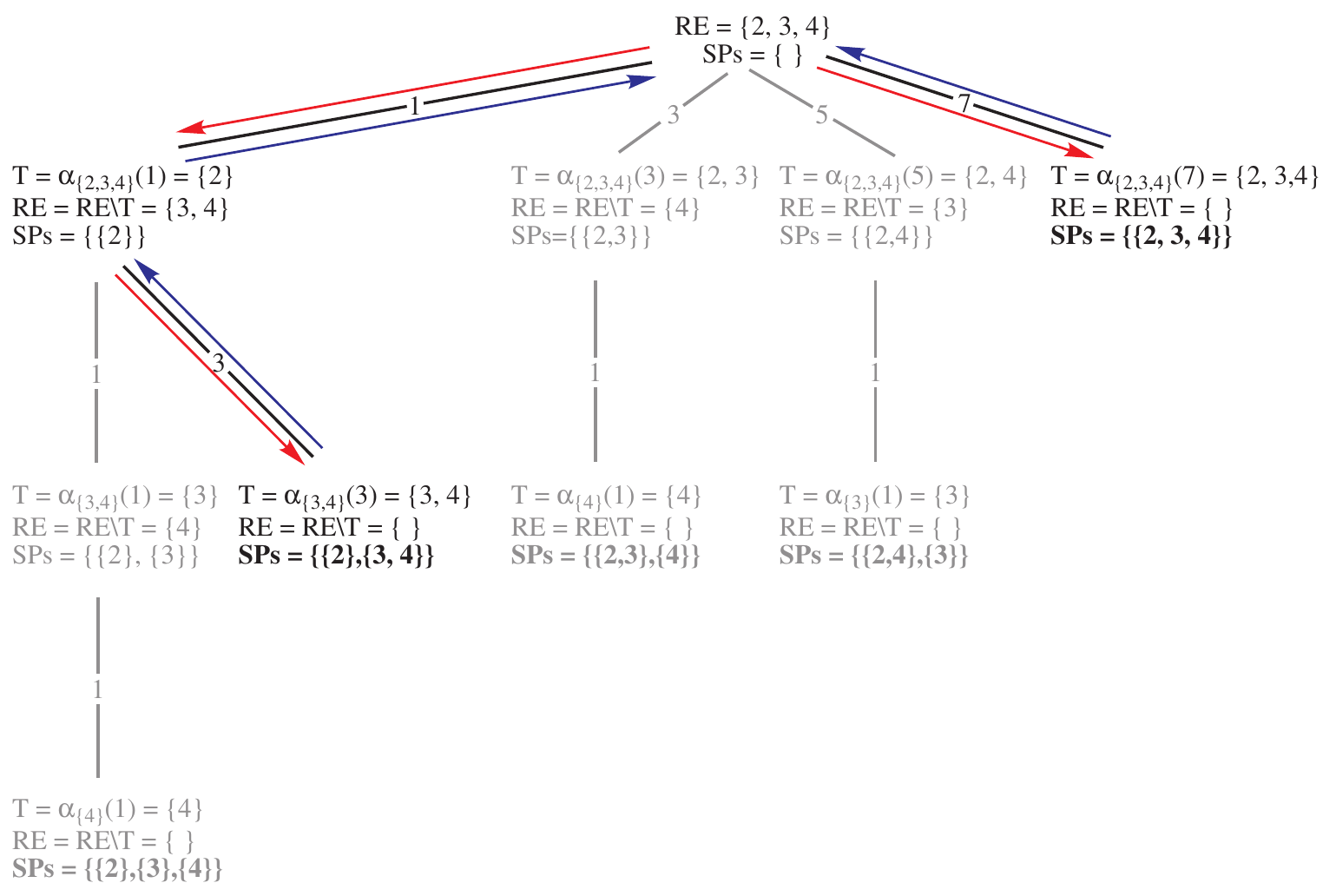}
\caption{A schematic diagram for the case ${\rm CreateSetPartitions([2..4],}$ ${\rm \{\{2, 3\}, \{2, 4\}, \{3\}, \{4\}\})}.$ The red and blue arrows represent forward and backward directions in the generating tree traversal, respectively. The number on each edge is an odd integer in $I_O(RE)$ of the parent node of the edge. The gray part of the tree is pruned due to the occurrence of a bad part in the process of creating corresponding set partition. }\label{re4}
\end{figure}

Note that after creating a set partition for $Irr(G)$, we invoke the ``CreateKappa" sub-algorithm for it. This is to check whether there exists a consistent set partition of $Con(G)$ or not. This function is explained in details in the next subsection.

\newpage

\subsection{Create the Consistent Set Partition $\mathcal{K}$ with respect to a Given Set Partition of $Irr(G)$}
We recall that finding all supercharacter theories of a group $G$ with $n$ conjugacy classes is equivalent to constructing all consistent pairs of set partitions. Suppose $Irrp$ is a given set partition of the irreducible characters of $G$. To find a consistent set partition of $Irrp$, we first define the matrix $A$ as follows (see Table \ref{tg2}).
\begin{itemize}
\item The rows of $A$ are the parts of $Irrp$ and so $A$ has exactly $|Irrp|$ rows.
\item The columns of $A$ are the conjugacy classes of $G$.
\item If $A = (a_{ij})$, then $a_{ij} = \sigma_{X_i}(K_j)$, where $1 \leq i \leq |Irrp|$, $1 \leq j \leq n$ and $K_j$ is a conjugacy classes of $G$.
\end{itemize}

Suppose $C_1$, $C_2$, $\ldots$, $C_n$ are all columns of $A$. We construct a matrix $ST$ and a list $Kappa$ as follows. Since $\{e\}$ is a part of each consistent set partition with $Irrp$,  we conclude that $\{1\} \in Kappa$. We start our algorithm by defining $Kappa = \{\{1\}, \{2\}\}$ and the submatrix $ST = [C_1,C_2]$.  For each $j$, $3 \leq j \leq n$, we compare $C_j$ with all constructed columns of $ST$ other than its first column. If $C_j$ is different from such columns of $ST$, then we add $C_j$ to $ST$ as a new column and add $j$ to $Kappa$ as a singleton part. Hence $Kappa := Kappa \cup \{\{j\}\}$ and $ST := [ST|C_j]$. If $C_j$ is equal to the $r$-th column of $ST$, then we add $j$ to the $r$-th part of $Kappa$, i.e.
$$Kappa = \{\{1\}, \ldots, \{\underbrace{\ldots, j}_{part~ r}\}, \ldots ,\{\ldots \}\}.$$

\begin{table}[htp]
\centering\caption{Matrix $A$.}\label{tg2}
\begin{tabular}{c|ccccc}
&$K_1$&$K_2$&$K_3$&$\cdots$ & $K_n$ \\
\hline
$\chi_1$&1 &1 &1 & $\cdots$ & 1\\
\hline
$\vdots$  &* & *& * &$\cdots $ &*\\
\hline
$X_i \left\{ \begin{array}{l}
\chi_{i_1}\\
\vdots\\
\chi_{i_t}
\end{array}\right.$ & $\sigma_{X_i}(K_1)$ & $\sigma_{X_i}(K_2)$  & $\sigma_{X_i}(K_3)$ & $\cdots$ & $\sigma_{X_i}(K_n)$ \\
\hline
$\vdots$  &* & *& * &$\cdots $ &*\\
\end{tabular}
\end{table}

If in the process of constructing $Kappa$ and $ST$ the inequality $|Kappa|>|Irrp|$ occurs, then we stop calculations without any result. It is because there is no consistent set partition with the same size as $Irrp$. If at the end of our calculations, $|Kappa| = |Irrp|$, then we conclude that $(Irrp, Kappa)$ is a supercharacter theory and $ST$ is the supercharacter table of $G$.

In Sub-algorithm 3, our pseudocode for computing a supercharacter theory of a group $G$ is presented. The input of the program $CreateKappa$ is a set partition of the irreducible characters of a group $G$  named $Irrp$ and its output is the consistent set partition of $Con(G)$ with respect to $Irrp$.

\newpage

\begin{tabular}{l}
\hline
\textbf{Sub-Algorithm 3} Create Kappa for a Given Set Partition of Irr(G)\\
\hline
\hspace*{.3cm}\textbf{Input:} A given group G, Irrp, A set partition of Irr(G) \hspace*{2cm} \\
\hspace*{.3cm}\textbf{Output:} (Irrp, Kappa), A supercharacter theory of G for a given Irrp \\
\hspace*{4cm}(if exists) \\
\hspace*{.3cm}CreateKappa(Irrp)\\
\hspace*{1.1cm}t := Character table of G\\
\hspace*{1.1cm}n := $|Con(G)|$\\
\hspace*{1.1cm}\textbf{for} each $part_i$ in Irrp \textbf{do}\\
\hspace*{1.7cm}\textbf{for} each j in $\{1,2, \ldots, n\}$ \textbf{do}\\
\hspace*{2.3cm}$A[i][j] := \sigma_{part_i}(K_j)$\\
\hspace*{1.7cm}\textbf{end for}\\
\hspace*{1.1cm}\textbf{end for}\\
\hspace*{1.1cm}ST := [C1, C2] //Ci is the i-th column of A\\
\hspace*{1.1cm}Kappa := [[1],[2]]\\
\hspace*{1.1cm}\textbf{for} each $C_j$ ($j \geq 3$) of A \textbf{do}\\
\hspace*{1.7cm}compare $C_j$ to all columns of ST except the first one\\
\hspace*{1.7cm}\textbf{if} $C_j$ equals to the r-th column of $ST$ \textbf{then}\\
\hspace*{2.3cm}$Kappa[r] :=Kappa[r] \cup \{j\}$\\
\hspace*{2.3cm}\textbf{if} $|Kappa|>|Irrp|$ \textbf{then}\\
\hspace*{2.9cm}\textbf{return}\\
\hspace*{2.3cm}\textbf{end if}\\
\hspace*{1.7cm}\textbf{else} \\
\hspace*{2.3cm}$ST := [ST | Cj]$\\
\hspace*{2.3cm}$Kappa := Kappa \cup \{[j]\} $\\
\hspace*{1.7cm}\textbf{end if}\\
\hspace*{1.1cm}\textbf{end for}\\
\hspace*{1.1cm}\textbf{if} $|Kappa| = |Irrp|$ \textbf{then}\\
\hspace*{1.7cm}print(Irrp, Kappa)\\
\hspace*{1.1cm}\textbf{end if}\\
\hspace*{.3cm}\textbf{end}\\
\hline
\end{tabular}

\vskip 3mm

To construct all supercharacter theories of a group $G$, it is possible to combine the Er's algorithm which creates all set partitions of $Irr(G)$ with the algorithm based on the proof of Theorem 2.2(c) in \cite{6}. This is our \textit{first algorithm}. Our main algorithm is a combination of the Sub-algorithms 1, 2 and 3. The pseudocode of the main algorithm is as follows.

\begin{flushleft}
\begin{tabular}{l}
\hline
\textbf{Main Algorithm} Find all Supercharacter Theories of $G$ by Filtering Bad Parts \\
\hline
\hspace*{.3cm}\textbf{Input:} A given group G  \\
\hspace*{.3cm}\textbf{Output:} All pairs (Irrp, Kappa) as supercharacter theories of G\\
\hspace*{.3cm}FindSupercharacterTheories(G)\\
\hspace*{1.5cm}BP := FindBadParts(G)\\
\hspace*{1.5cm}n := $|Con(G)|$\\
\hspace*{1.5cm}${\rm CreateSetPartitions([n]^{\star}, BP)}$\\
\hspace*{.3cm}\textbf{end}\\
\hline
\end{tabular}
\end{flushleft}

In the following example, our sub-algorithm for computing bad parts of the cyclic group $Z_{13}$ is analyzed. The notation $SmallGroup(n, i)$ stands for the $i$-th group of order $n$ in the small group library of GAP.

\begin{example}\label{ex1}
Suppose $G = Z_{13}$. Then $G$ has exactly $4095$ non-empty subsets which can be a part of a set partition of $Irr(G)$. Our calculations with GAP show that among these subsets, there are $4020$ bad parts. The set partitions containing at least one of these bad parts have to be deleted. 
Note that $\{2\}$ and $\{3\}$ are bad parts. There are $B(10) =115975$ set partitions which contain $\{2\}$ or $\{3\}$ as a part and so they have to be deleted from our investigations. Consequently,  $96.18\%$ of all parts of  $\mathcal{P}(Irr(Z_{13})\setminus \{1_{Z_{13}}\}) = \mathcal{P}([13]^{\star})$ are bad parts. If for partitioning $Irr(Z_{13})$ we apply the Er's algorithm, then the program for computing all supercharacter theories takes so long to run. Note that the Er's algorithm does not have this potential to find bad parts. 
For example, our algorithm that is presented in this paper takes less than one second for computing all supercharacter theories of $Z_{13}$, while for the Er's algorithm we need almost $548$ seconds.
\end{example}

We end this section by noticing that:
\begin{enumerate}
\item The result of our main algorithm is supercharacter theories of a given group $G$. In fact, conditions of being a supercharacter theory are checked by the main algorithm in each case.

\item All supercharacter theories are generated by our algorithm. This is guaranteed by the proof of \cite[Theorem 2.2(c)]{6}. \\

\end{enumerate}

\section{Analysis of Algorithms}
In Section $2$, three sub-algorithms for computing bad parts, set partitions and supercharacter theories were presented. The aim of this section is to calculate the running time and the space complexity of these sub-algorithms and our main algorithm.

\begin{thm}
Let $T_1(n)$, $S_1(n)$ and $BP$ be the time complexity function, the space complexity function and the list of bad parts for a given group, respectively. Then, $T_1(n) \in O((n^2-n) \cdot 2^{n-1})$ and $S_1(n) \in O(n) + O(|BP|)$, where $n=|Con(G)|$.
\end{thm}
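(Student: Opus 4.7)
The plan is to read off the cost of Sub-Algorithm~1 directly from its pseudocode, bounding separately the number of iterations of each loop and the cost of the body. Because the algorithm is a clean double loop with constant-time branching, the dominant term should factor as (number of outer iterations) $\times$ (number of inner iterations) $\times$ (work per inner iteration), and the space bound should come from identifying which quantities persist across iterations and which are transient.

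For the time bound $T_1(n) \in O((n^2-n)\cdot 2^{n-1})$, I would argue as follows. The outer \textbf{for} loop ranges over every element of $\text{AllParts} = \mathcal{P}([n]^{\star})$, and since $|[n]^{\star}| = n-1$ this contributes the factor $2^{n-1}$. Inside it, the inner \textbf{for} loop ranges over $c \in [n]^{\star}$, contributing a factor of $n-1$. The body of the inner loop computes one supercharacter value
\[
\sigma_{\text{part}}(c) \;=\; \sum_{\chi_i \in \text{part}} \chi_i(1)\,\chi_i(c),
\]
reads the summands from the presorted character table $t$, and inserts the result into the running set $R$. Since each part has at most $n-1$ irreducible characters and each lookup in $t$ costs $O(1)$, computing one value $\sigma_{\text{part}}(c)$ costs $O(n)$. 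If the cardinality $|R|$ is maintained incrementally, the final test $|R|=n-1$ (and, when successful, the insertion into BadParts) is $O(1)$ amortised. Multiplying yields per-part cost $O((n-1)\cdot n)=O(n^{2}-n)$, and summing over all parts gives $O((n^{2}-n)\cdot 2^{n-1})$, as claimed.

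For the space bound $S_1(n) \in O(n)+O(|BP|)$, I would show that at any instant the only data surviving across iterations of the outer loop are the output list BadParts, which contains $|BP|$ entries, together with the input character table $t$ and the scalar $n$, which are not counted. The working variables inside a single outer iteration — namely the current $\text{part}$, the set $R$, and the index $c$ — each occupy $O(n)$ storage, provided parts and elements of $R$ are represented succinctly via the bijection $\alpha_{[n]^{\star}}$ as integers in $[1,2^{n-1}-1]$. A point worth stressing in the proof is that AllParts must be traversed by an iterator rather than materialised; otherwise one would incur an unwanted $2^{n-1}$ factor in space.

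The main obstacle will be pinning down the cost model. The $O(n)$ per evaluation of $\sigma_{\text{part}}$ tacitly assumes unit-cost arithmetic on character values and unit-cost indexing into $t$, and the $O(1)$-per-element representation of parts relies on the integer coding $\alpha$ rather than explicit set storage. Once these conventions are declared, the remainder of the argument is routine bookkeeping. I would also note in passing that $|BP|$ is itself bounded by $2^{n-1}$, so in the worst case the space bound collapses to $O(2^{n-1})$; the stated formulation $O(n)+O(|BP|)$ is the tighter instance-dependent version that matches what the algorithm actually stores.
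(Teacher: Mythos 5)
Your proposal is correct and follows essentially the same route as the paper: a direct operation count of the nested loops giving $O(n)\cdot(n-1)\cdot 2^{n-1}$ for time, and $O(n)$ transient storage plus $O(|BP|)$ output for space. The only cosmetic difference is that the paper sums the per-part cost weighted by part size, using $\sum_{i}i\binom{n-1}{i}=(n-1)2^{n-2}$, and charges the distinctness test as $\binom{n-1}{2}$ pairwise comparisons rather than your amortised set-membership check; both bookkeepings land on the same bound.
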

\begin{proof}
To compute the running time of the function $FindBadParts$, we should know values of $\sigma_X(j)$, $2 \leq j \leq n$, where $X = \{x_1,\ldots, x_i\} \in \mathcal{P}([n]^{\star})$. For this purpose, $i(n-1)$ multiplications and  $(i-1)(n-1)$ additions are needed. Then, we have $\frac{(n-1)(n-2)}{2}$ comparisons for investigating the property that $\sigma_X$s are distinct. Since there are ${n-1 \choose i}$ $i$-subsets, the complexity of this sub-algorithm can be computed by the following formula:
$$T_1(n) = \sum_{i=1}^{n-1}\left[ (n-1)(2i-1){n-1 \choose i} + \frac{(n-1)(n-2)}{2}\right].$$
Therefore,

\begin{eqnarray*}
T_1(n) &=& \sum_{i=1}^{n-1}\left[ (n-1)(2i-1){n-1 \choose i} + \frac{(n-1)(n-2)}{2}\right]\\
      &=& \left( n-1\right)  \sum_{i=1}^{n-1}(2i-1){n-1 \choose i} + \frac{(n-1)^2(n-2)}{2} \\
       &<& n \cdot  \sum_{i=1}^{n-1}2i \cdot {n-1 \choose i} + n^3\\
       &=& 2 n\cdot \sum_{i=1}^{n-1}i \cdot {n-1 \choose i} + n^3\\
       &=& 2n\cdot (n-1) \cdot 2^{n-2} + n^3 = (n^2-n) \cdot 2^{n-1} + n^3.
\end{eqnarray*}
Hence $T_1(n) \in O((n^2-n) \cdot 2^{n-1})$. 

To compute the space complexity of this sub-algorithm, we note that all parts are generated one by one. If a generated part is bad, then we add it to $BP$. To keep each part, our calculations need an array of size $n-1$. Moreover, an $(n-1)$-length array is needed in order to save the values $\sigma_X(i)$. Therefore, this sub-algorithm needs a memory of size $O(n)$ to keep each part. For saving all bad parts, we need another array such that its size depends only on the number of bad parts of irreducible characters of a given group. Consequently, $S_1(n) \in O(n) + O(|BP|)$.
\end{proof}

\begin{thm}\label{th1}
Suppose $T_2(n)$  and $S_2(n)$ are the time and space complexity functions of $Create$- $SetPartitions(RE, BP)$, respectively. Then,
\begin{enumerate}
\item $T_2(n) \in O(2B(n))$;
\item The space complexity $S_2(n)$ belongs to $\Theta(n)$.
\end{enumerate}
\end{thm}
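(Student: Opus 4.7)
The plan is to analyze the recursion tree of \emph{CreateSetPartitions} and reduce the count of recursive invocations to the Bell recurrence. Let $T(m)$ denote the total number of recursive calls generated by a single top-level invocation with $|RE|=m$ in the worst case $BP=\emptyset$, since any nonempty $BP$ can only prune subtrees. A call with $|RE|=m$ spawns one child for every $k\in I_O(RE)$, and these $k$ are in bijection with the subsets of $RE$ containing the first element of $RE$. Grouping such subsets by the size of $newRE=RE\setminus\alpha_{RE}(k)$ yields
\[
T(0)=1,\qquad T(m)=1+\sum_{j=0}^{m-1}\binom{m-1}{j}\,T(j)\quad (m\geq 1).
\]
Using the classical Bell identity $B(m)=\sum_{j=0}^{m-1}\binom{m-1}{j}B(j)$ with $B(0)=1$, a short strong induction shows $T(m)=2B(m)$ for all $m\geq 1$, because assuming the claim for indices less than $m$ gives
\[
T(m)=1+\binom{m-1}{0}\cdot 1+\sum_{j=1}^{m-1}\binom{m-1}{j}\cdot 2B(j)=2+2\bigl(B(m)-1\bigr)=2B(m).
\]
Since the top-level invocation in Sub-Algorithm 2 starts with $|RE|=n-1$, we conclude $T_2(n)\le T(n-1)=2B(n-1)\le 2B(n)$, which establishes part~(1).

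For part (2), I will give matching $O(n)$ and $\Omega(n)$ bounds. Because \emph{CreateSetPartitions} is depth-first, at most one root-to-leaf branch of the recursion tree is in memory at any instant. That branch has length at most $n-1$ because every recursive call strictly shrinks $RE$, and each activation record stores only a loop index and a constant amount of auxiliary data. The two shared working structures, namely the accumulating partition $SPs$ and the current $RE$, together store pairwise disjoint subsets of $[n]^{\star}$, so their combined storage occupies at most $n-1$ cells. Summing these contributions gives $S_2(n)\in O(n)$. The matching $\Omega(n)$ lower bound is immediate because representing the initial value $RE=[n]^{\star}$ already requires $n-1$ cells, so $S_2(n)\in\Theta(n)$.

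The step I expect to require the most care is the transition from the unfiltered recurrence to the actual filtered algorithm. The key observation is monotonicity of the recursion tree: whenever the test $\sigma_{RE}(k)\notin BP$ fails, the entire subtree rooted at the corresponding child disappears, which can only decrease the total number of invocations. Hence the unfiltered bound $T(m)=2B(m)$ is a valid upper bound in every case. A secondary point is that the per-call overhead -- evaluating $\sigma_{RE}(k)$, performing the set difference $RE\setminus\alpha_{RE}(k)$, and scanning $BP$ -- is polynomial in $n$ and is absorbed at the level of asymptotic bookkeeping used throughout the paper, so the bound $O(2B(n))$ remains unaffected.
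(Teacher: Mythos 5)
Your proof is correct and follows essentially the same route as the paper: both analyze the recursion tree of \emph{CreateSetPartitions} in the worst case $BP=\emptyset$, arrive at the same binomial recurrence, and identify its solution as $2B(\cdot)$, and both settle part (2) by observing that $RE$ together with the parts stored in $SPs$ occupies at most $n-1$ cells. The only difference is cosmetic: where the paper recognizes the recurrence as OEIS sequence A060719 and cites the closed form $a(m)=2B(m+1)-1$, you prove the closed form $T(m)=2B(m)$ directly by induction from the Bell recurrence $B(m)=\sum_{j=0}^{m-1}\binom{m-1}{j}B(j)$, which makes the argument self-contained (and your explicit remark that pruning by $BP$ only shrinks the tree is a point the paper leaves implicit).
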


\begin{proof}
To prove (1), let $T_2(n)$ denote the number of calculations needed to obtain all set partitions without any bad part of the $(n-1)$-element set $RE$. For computing the time complexity in the worst case $|BP|=0$, we have to count the number of edges in the generating tree of the function ${\rm CreateSetPartitions(RE, BP)}$ in general, see Figure \ref{re4}. Then,
$$T_2(n) = \displaystyle\sum_{i=0}^{n-1}{n-1 \choose i}(T_2(i)+1).$$

In OEIS \cite{oeis}, the sequence $\{a(n)\}_{n \geq 0}$ with code $A060719$ exists which is defined as follows.
$$ a(n+1)=a(n)+\sum_{i=0}^{n}{n \choose i}(a(i)+1); \ a(0) =1$$
By \cite{999}, $a(n) = 2B(n+1) - 1$ and since $T_2(n) = a(n-1)$, we conclude that $T_2(n) = 2B(n) - 1$. Hence, the time complexity of this algorithm is $O(2B(n))$. 

The space complexity depends on the sizes of $SPs$ and $RE$. Since the union of $RE$ with the members of $SPs$ is $[n]$, $S_2(n) \in \Theta(n)$ which proves (2).
\end{proof}
In the next theorem, we calculate the complexity of our Sub-algorithm 3 which computes the supercharacter theories of $G$.
\begin{thm}\label{th2}
Suppose $G$ has exactly $n$ conjugacy classes. For a given set partition $Irrp$, the time and space complexities of the function $CreateKappa$  are $O(n^3)$ and $O(n^2)$, respectively.
\end{thm}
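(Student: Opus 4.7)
The plan is to split the execution of $CreateKappa$ into two clearly separable phases — constructing the matrix $A$ and then running the column-comparison loop that simultaneously builds $ST$ and $Kappa$ — and to bound the number of elementary operations and the storage required in each phase. Summing then gives the stated $O(n^3)$ time and $O(n^2)$ space bounds, with the column-comparison loop being the dominant contributor to time and the matrices $A$ and $ST$ being the dominant contributors to space.

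For the time analysis, I would first bound the cost of building $A$. The matrix has $r=|Irrp|\le n$ rows and $n$ columns, and each entry $A[i][j]=\sigma_{X_i}(K_j)=\sum_{\chi\in X_i}\chi(e)\chi(K_j)$ requires $|X_i|$ multiplications and $|X_i|-1$ additions. Since the parts $X_i$ are pairwise disjoint subsets of $Irr(G)$, for every fixed column $j$ the total work is
\[
\sum_{i=1}^{r}O(|X_i|)=O\!\left(\sum_{i=1}^{r}|X_i|\right)=O(n),
\]
so building $A$ takes $O(n^2)$ operations. Next, the outer loop of the comparison phase runs over the $n-2$ columns $C_3,\ldots,C_n$. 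Throughout the loop, $ST$ contains at most $r\le n$ columns (since $|Kappa|$ is aborted as soon as it exceeds $|Irrp|$), and each column has length equal to the number of rows of $A$, which is also $\le n$. Thus each iteration performs at most $n$ column comparisons at a cost of $O(n)$ each, i.e., $O(n^2)$ per iteration, contributing $O(n^3)$ in total. The remaining actions — appending $j$ to an existing block, appending a new singleton, appending a new column to $ST$, and checking $|Kappa|>|Irrp|$ against a counter — are all $O(1)$ per iteration, so the overall time complexity is $O(n^2)+O(n^3)=O(n^3)$.

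For the space analysis, the matrix $A$ has dimensions $r\times n$ with $r\le n$ and occupies $O(n^2)$ cells; the matrix $ST$ has at most $r\times r\le n\times n$ cells and thus also fits in $O(n^2)$; the list $Kappa$ stores the indices $2,\ldots,n$ partitioned into blocks, using $O(n)$ total cells; and the counter monitoring $|Kappa|$ is $O(1)$. Summing these contributions yields $S_3(n)\in O(n^2)$, as claimed.

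The step I expect to require the most care is the cost estimate for building $A$: a naive count treating each entry as an $O(n)$ computation would give the weaker bound $O(n^3)$ for that phase, and while this is harmless for the final $O(n^3)$ time bound, the sharper $O(n^2)$ estimate is what makes clear that the comparison loop is genuinely the bottleneck. Every other step is routine bookkeeping on loop counts and array sizes.
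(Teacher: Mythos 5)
Your proposal is correct and follows essentially the same route as the paper's proof: both split the analysis into the $O(n^2)$ construction of the matrix $A$ and the column-comparison loop, bound the latter by (number of columns processed) $\times$ (columns of $ST$) $\times$ (cost per column comparison) to get $O(n^3)$, and charge the space to the matrices $A$ and $ST$ of sizes at most $n\times n$. Your accounting for building $A$ via disjointness of the parts is slightly cleaner than the paper's, and your comparison-loop bound is slightly coarser (using $n$ where the paper tracks $k=|Irrp|$), but these are cosmetic differences.
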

\begin{proof}
Suppose $Irrp$ is a set partition for the set of all irreducible characters of a group $G$ and  $|Irrp| = k$. To calculate the matrix $A$, we first obtain all values $\chi_i(1)\chi_i$, $1 \leq i \leq n$. Since $\chi_i$ has $n$ values, there are $n^2$ different products   $\chi_i(1)\chi_i$. To compute $\sigma_X$ and in the worst case $X = \{\chi_1, \ldots, \chi_n\}$, we need $n(n-2)$ products. As a result, the calculations for obtaining the matrix $A$ is of the time complexity $O(n^2)$.

Now, we count all the operations that we need to construct $ST$ and the list $Kappa$. The first and second columns of $ST$ are the same as the first and second columns of $A$, respectively. Therefore, we have nothing to count for these columns.
 For the third column of $ST$, we have to compare the third column of $A$ with the second column of $ST$ and so there are $k$ comparisons. For the fourth column of $ST$, the fourth column of $A$ should be compared with the second and third columns of $ST$, and so, there are at most $2k$ comparisons, and so on. Suppose that from the column $C_j$ to the next, $|Kappa| = |Irrp|$. In this case, the remaining conjugacy classes of the group should be distributed among the other parts of $Kappa$ and hence we do not have a new part in $Kappa$. Thus from $C_j$ to $C_n$, the number of comparisons is equal to $k(k-1)$. Therefore, the total number of comparisons for constructing $ST$ and $Kappa$ is:
$$T_3(n) := k(1) + k(2) + \cdots +\underbrace{k(k-1) + k(k-1) + \cdots + k(k-1)}_{n-j+1}.$$ 
Since $j \geq 3$, $n-j+1 \leq n-2$. Thus $T_3(n) \leq k(\frac{n^2-5n+6}{2}) \leq n(\frac{n^2-5n+6}{2})$, and so, $T_3(n) \in O(n^3)$.

Suppose $S_3(n)$ is the space complexity of the function $CreateKappa$. We have two matrices $A$ and $ST$ of sizes $k \times n$ and $k \times k$, respectively. Since in the worst case $k = n$, we conclude that $S_3(n) \in O(n^2)$.
\end{proof}

We are now ready to compute the time and space complexity of the first and main algorithms. In the first algorithm, the generated set partitions are used as an input to compute all supercharacter theories of a finite group. In what follows, we assume that our group has exactly $n$ conjugacy classes. The running time of our first algorithm is $T_{Er}(n) = O(n^3).\Theta(2B(n))$ and so $T_{Er}(n) \in O(n^3B(n))$. 

In the main algorithm, we do not need to call the sub-algorithm CreateKappa for bad set partitions. Therefore, the sub-algorithm CreateKappa should be called $B(n)-|BP_s|$ times in order to calculate $Kappa$, where $|BP_s|$ is the number of bad set partitions. As a result, the time complexity of the main  algorithm is
\begin{center}
 $O((n^2-n)2^{n-1}+(B(n)-|BPs|)n^3) = O(n^3(B(n)-|BPs|))$.
\end{center} 
 Consequently, we have the following result:
\begin{theorem}
The time complexity of our first and main algorithms are $O(n^3B(n))$ and $O(n^3(B(n)-|BPs|))$, respectively.
\end{theorem}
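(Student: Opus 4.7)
The plan is to establish both bounds by simply adding the costs of the sub-algorithms that constitute each of the two main routines, and then absorbing lower-order terms.

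For the first algorithm, I would begin by recalling that Er's algorithm produces all $B(n)$ set partitions of $[n]^{\star}$ in time $\Theta(1.6\,B(n)) = \Theta(B(n))$, and that for each partition so produced the sub-routine \textit{CreateKappa} is invoked once. Since \textit{CreateKappa} costs $O(n^3)$ per invocation by Theorem~\ref{th2}, and since the generation cost $\Theta(B(n))$ is dominated by the cost of the \textit{CreateKappa} calls, the total running time is
\[
T_{Er}(n) \in \Theta(B(n)) + B(n)\cdot O(n^3) \;=\; O\bigl(n^3 B(n)\bigr),
\]
which is the first claim.

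For the main algorithm, I would track the three sub-algorithms separately. The call to \textit{FindBadParts} costs $O((n^2-n)\cdot 2^{n-1})$ by the first theorem of this section. The call to \textit{CreateSetPartitions} generates only those set partitions with no bad part, and its running time is bounded above by the unpruned version, which is $O(2B(n)) = O(B(n))$ by Theorem~\ref{th1}. Finally, \textit{CreateKappa} is invoked exactly once for each non-bad set partition, i.e.\ $B(n) - |BPs|$ times, each call costing $O(n^3)$ by Theorem~\ref{th2}. Summing these contributions yields
\[
T_{main}(n) \in O\!\bigl((n^2-n)\,2^{n-1}\bigr) + O(B(n)) + O\!\bigl(n^3(B(n)-|BPs|)\bigr).
\]

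The last step is to absorb the first two summands into the third. Since $B(n)$ grows super-exponentially (for instance $B(n) \geq 2^{n-1}$ for all $n \geq 2$), and in the interesting regime where the algorithm is useful we have $B(n) - |BPs| = \Omega(2^{n-1})$, the term $n^3(B(n) - |BPs|)$ dominates both $n^2\cdot 2^{n-1}$ and $B(n)$. Hence $T_{main}(n) \in O\bigl(n^3(B(n) - |BPs|)\bigr)$, as required. The only delicate point, and what I would treat as the main obstacle, is the degenerate case in which $|BPs|$ is extremely close to $B(n)$: there the cost of \textit{FindBadParts} itself becomes the dominant contribution, and one must either assume $B(n) - |BPs|$ is not vanishingly small compared to $2^{n-1}$, or else state the bound as $O\!\bigl((n^2-n)\,2^{n-1} + n^3(B(n)-|BPs|)\bigr)$ and observe that the stated simplification holds under the mild hypothesis on $|BPs|$ that is met in all examples of interest.
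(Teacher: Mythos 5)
Your proof takes essentially the same route as the paper: both bounds are obtained by summing the costs of the constituent sub-algorithms (the $\Theta(B(n))$ generation cost times the $O(n^3)$ per-call cost of \textit{CreateKappa} for the first algorithm; the $O((n^2-n)2^{n-1})$ cost of \textit{FindBadParts} plus $(B(n)-|BPs|)$ calls to \textit{CreateKappa} for the main algorithm) and then absorbing lower-order terms. The one place you go beyond the paper is the final absorption step: the paper simply asserts $O((n^2-n)2^{n-1}+(B(n)-|BPs|)n^3)=O(n^3(B(n)-|BPs|))$ without comment, whereas you correctly flag that this simplification requires $B(n)-|BPs|$ not to be vanishingly small relative to $2^{n-1}$ --- a hypothesis the paper leaves implicit, so your caveat identifies a genuine (if minor) gap in the paper's own argument rather than in yours.
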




\section{Performance Evaluation}
To evaluate the performance of the main algorithm and then compare it with the first one, both algorithms have been implemented in the computer algebra system GAP under Windows $10$ Home Single Language. 
The average running times for both algorithms after three runs on a computer with processor Intel(R) Core(TM) m7-6Y75 CPU @ 1.20 GHz 1.51 GHz, installed memory (RAM) 8.00 GB (7.90 GB usable), system type $64$-bit operating system and x64-based processor are summarized in Table \ref{tt3}. In this table, we have chosen groups which have the maximum or the minimum number of supercharacter theories with different number of conjugacy classes. Let $BP(G)$ be the set of all bad parts in a group $G$ and $\alpha(G) = \frac{|BP(G)|}{2^{\kappa(G) - 1}-1}$ in which $\kappa(G)$ denotes the number of distinct conjugacy classes of $G$. We have the following two cases in general.
\begin{enumerate}
\item \textbf{There is not any bad part in $\mathcal{P}(Irr(G))$.} In this case, the algorithm for computing supercharacter theories based on the Er's algorithm have a faster running time. Note that we have a pre-process for finding bad parts but such an overhead is very small with respect to the total running time.

\item \textbf{There are some bad parts in $\mathcal{P}(Irr(G))$.} In this case, by removing  these parts  from our calculations, the main algorithm will have a faster running time. For example, in the cyclic group $Z_{13}$ in which $\%98.17$ of all parts are bad, our main algorithm takes less than one second to run while the other algorithm takes more than 548 seconds. In rare cases such as the Mathieu group $M_{22}$ in which a few percentage of parts are bad, the running time of the first algorithm is a bit faster.
\end{enumerate}

\begin{table}[htp]
\centering
\caption{Comparing the running times for some groups.}\label{tt3}
\begin{tiny}
\begin{tabular}{|c|c|c|c|c|c|c|c|} \hline
$\kappa(G)$ & G & $|Sup(G)|$  & $|BP(G)|$& $\alpha(G)$ & Main algorithm & First algorithm & FA/MA\\
 &  &  &  &   & (MA)(second)& (FA)(second) &\\ \hline
10 & $[100,11]$ & 623  & 0 & 0&1.4 & 1.1  & 0.8\\ \hline
11 & $[32,43]$ & 376  & 0 &  0&7.6 & 6.7 & 0.9\\ \hline
11 & $[32,44]$ & 376  & 0 &  0& 8.1 & 7.5 & 0.9\\ \hline
12 & $[1296,3523]$ & 1058  & 0 & 0& 70.1 & 62 &0.9\\ \hline
13 & $[64, 32]$  & 325  & 0 &  0&464.6 & 429.8 & 0.9\\ \hline \hline

12 & D36 & 51  & 168 & 8.2 & 65.4 & 68.2 & 1.04\\ \hline
12 & M22 & 5  & 288 & 14.1 & 65.8 & 61.8 & 0.9\\ \hline
10 & M11 & 5  & 112 & 21.9 & 0.8 & 1.1 & 1.4\\ \hline
10 & D28 & 23  & 144 & 28.9 & 0.8 & 1.7 & 2.1\\ \hline
10 & $[120,35]$ & 10  & 152 & 29.7 & 0.6 & 1.1 &  1.8\\ \hline
13 & $[93,1]$  & 9  & 1980 & 48.4 & 169.7 & 662.3 & 3.9\\ \hline
13 & $[253, 1]$ & 9  & 1980 & 48.4 & 127.8 & 521.3 &4.08\\ \hline
10 & Z10 & 10  & 376 & 73.6 &  0.06 & 1.2 & 20\\ \hline
10 & D34 & 5  & 480 & 93.9 &  0.04 & 1.3 &  32.5\\ \hline
11 & Z11 & 4  & 990 & 96.8 &  0.1 & 7.9 & 79\\ \hline
13 & Z13 & 6  & 4020 & 98.1 &  1.2 & 548.2 & 456.8\\ \hline
11 & D38 & 4  & 1008 & 98.5 & 0.1& 8.5 & 85\\ \hline
13 & D46 & 3  & 4092 & 99.9 & 1.2 & 610.6 & 508.8 \\ \hline
\end{tabular}
\end{tiny}
\end{table}

To compare the first and main algorithms, the running time of the groups with exactly $13$ conjugacy classes with respect to these algorithms are depicted in Figure \ref{fig3}. In this figure, groups numbered 29-53 have faster running times with the main algorithm. The result shows that the main algorithm is better for some classes of groups.

\begin{figure}[htp]
\centering
\includegraphics[scale=.4]{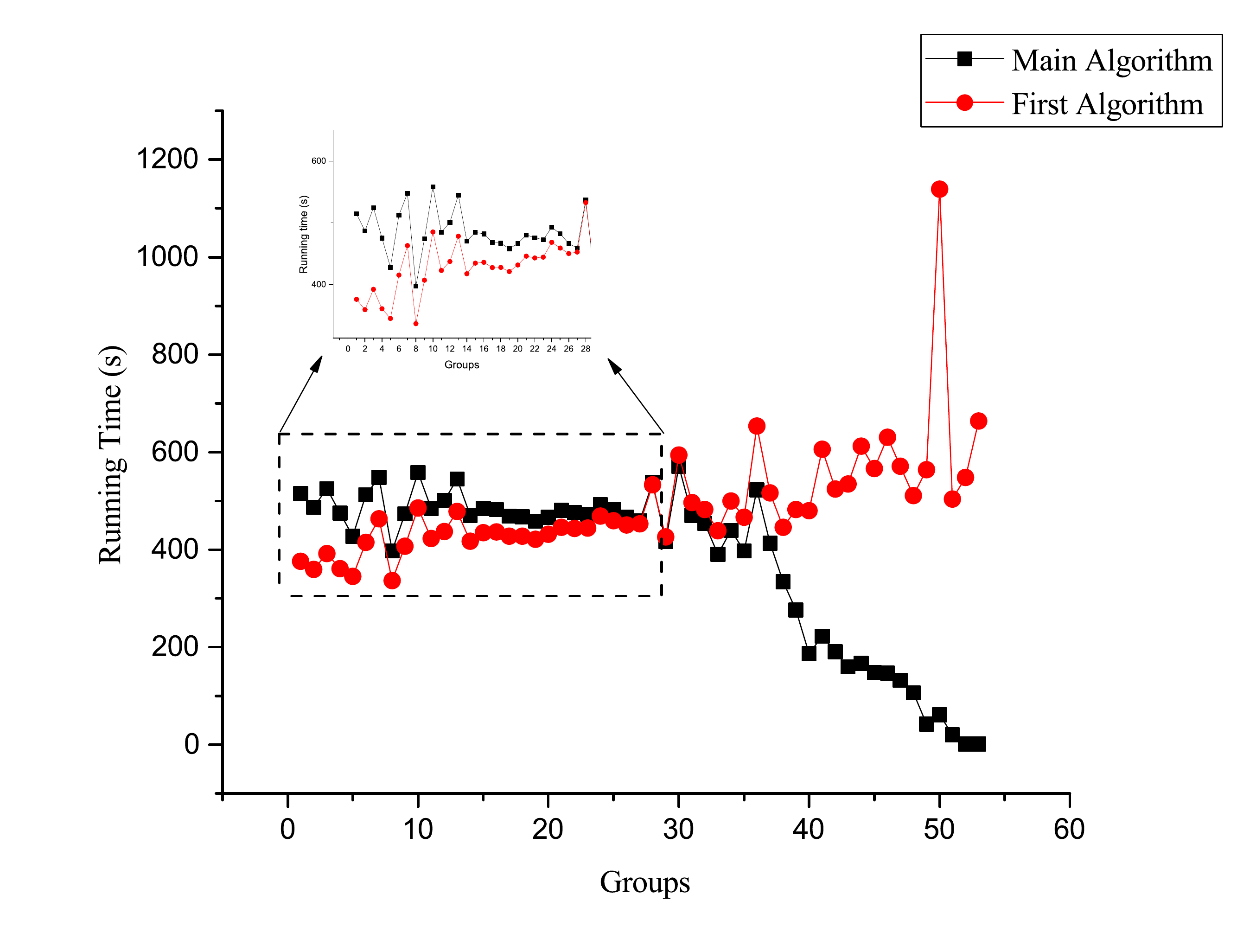}

\caption{A diagram for the running time of all $53$ groups which are listed in Table \ref{t5}.} \label{fig3}

\end{figure}

\begin{table}[htp]
\centering
\caption{The GAP id of all groups with exactly 13 conjugacy classes.}\label{t5}
\begin{tabular}{|c|l||c|l||c|l||c|l||c|l|}
\hline
$1 $ &  $[162,21]$ & $ 12$ & $[162,20] $  & $23 $ & $[960,11359] $  & $ 34$ & $[100,10] $  & $45 $ & $[328,12] $ \\
\hline
$2 $ & $[96,191] $ & $13 $ & $[1944,2290] $ & $24 $ & $[64,33] $  & $ 35$ & $[162,15] $  & $46 $ & $[148,3] $ \\
\hline
$3 $ & $[400,206] $ & $14 $ & $[64,37] $ & $25 $ & $[1000,86] $ & $ 36$ & $[40,6] $  & $47 $ & $[333,3] $  \\
\hline
$4 $ & $[162,22] $ & $15 $ & $[64,32] $ & $26 $ & $[720,409] $ & $ 37$ & $[1053,51] $  & $48 $ & $[156,7] $ \\
\hline
$5 $ & $[192,1494] $ & $16 $ & $[96,190] $ & $27 $ & $[576,8652] $ & $ 38$ & $[120,38] $  & $49 $ & $[301,1] $  \\
\hline
$6 $ & $[96,193] $ & $17 $ & $[192,1491] $ & $28 $ & $[40,4] $ & $ 39$ & $[600,148] $  & $50 $ & $[205,1] $ \\
\hline
$7 $ & $[1944,2289] $ & $18 $ & $[64,36] $ & $29 $ & $\textbf{[162,11]} $ & $ 40$ & $[216,86] $  & $51 $ & $[150,5] $ \\
\hline
$8 $ & $[192,1493] $ & $19 $ & $[192,1492] $ & $30 $ & $[160,199] $ & $ 41$ & $[258,1] $  & $52 $ & $[13,1] $ \\
\hline
$9 $ & $[1440,5841] $ & $20 $ & $[64,35] $ & $31 $ & $[324,160] $ & $ 42$ & $[310,1] $  & $53 $ & $[46,1] $ \\
\hline
$10 $ & $[40,8] $ & $21 $ & $[162,19] $ & $32 $ & $[162,13] $ & $ 43$ & $[253,1] $  &  &  \\
\hline
$11 $ & $[64,34] $ & $22 $ & $[216,87] $ & $33 $ & $[1320,133] $ & $ 44$ & $[93,1] $  &  & \\
\hline
\end{tabular}
\end{table}

\section{Concluding Remarks}
In this paper, two algorithms for computing all supercharacter theories of a finite group $G$ have been presented. The first algorithm is based on the Er's algorithm. In the main algorithm, we have introduced the new feature ``bad part" for the parts of $Irr(G)$. 
Since none of the supercharacter theories contains these bad parts, by filtering and detecting the set partitions of $Irr(G)$ which have at least one bad part, the running time of this algorithm decreases significantly. 

Suppose $BP(G)$ denotes the set of all bad parts in a group $G$,  $\alpha(G) = \frac{|BP(G)|}{2^{\kappa(G) - 1}-1}$ and $|Sup(G)|$ is the number of supercharacter theories of $G$. In Table \ref{3}, the percentage of bad parts for some cyclic and dihedral groups are given.

\begin{table}[htp]
\centering
\caption{Percentage of bad parts for some cyclic and dihedral groups.}\label{3}
\begin{tabular}{|c|c|c|c||c|c|c|c|} \hline
$ \kappa(G)$ &Group & $\alpha(G)$ & $|Sup(G)|$ &$ \kappa(G)$ & Group & $\alpha(G)$ & $|Sup(G)|$\\
\hline
4& $D_4$ &$\% 0$ &5 & 2 & $Z_2$ &$\% 100$ & 1  \\ \hline
3& $D_6$ &$\% 66.67$ &2  & 3 & $Z_3$ &$\% 66.67$ & 2\\ \hline
4& $D_{10}$ &$\% 57.14$ &3 & 5 & $Z_5$ &$\% 80$ &3 \\ \hline
5& $D_{14}$ &$\% 80$ & 3 & 7 & $Z_7$ &$\% 85.7$ &4 \\ \hline
7& $D_{22}$ &$\% 95$ &3 & 11 & $Z_{11}$ &$\% 96.77$ &4  \\ \hline
8& $D_{26}$ & $\% 84.3$&5 & 13 & $Z_{13}$ &$\% 98.16$ &6  \\ \hline
10& $D_{34}$ &$\% 93.75$ &5 & 17 & $Z_{17}$ &$\% 99.6$ &5  \\ \hline
11& $D_{38}$ &$\% 98.53$ &4 & 19 & $Z_{19}$ &$\% 99.78$ &6  \\ \hline
13& $D_{46}$ &$\% 99.92$ &3  & & & &                   \\ \hline
16& $D_{58}$ &$\% 99.2$ &5 & & & &                    \\ \hline
17& $D_{62}$ &$\% 99.88$ &5  & & & &                   \\ \hline
\end{tabular}
\end{table}

\newpage
 These calculations suggest the following conjecture.

\begin{conj}\label{con1}
If $\beta_n = \alpha(Z_{p_n})$ and $\gamma_n = \alpha(D_{2p_n})$, then $\lim_{n \rightarrow \infty}\beta_n = \lim_{n \rightarrow \infty}\gamma_n = 1$, where $p_n$ is the $n$-th prime number. 
\end{conj}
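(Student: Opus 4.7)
The plan is to reduce the bad part condition to a group-theoretic stabilizer condition via the irreducibility of the cyclotomic polynomial $\Phi_{p}$, then count bad parts exactly by M\"obius inversion over the subgroup lattice of a cyclic group, and finally take asymptotics. Fix an odd prime $p$ and a primitive $p$-th root of unity $\zeta$, and write $m = (p-1)/2$.

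For $G = Z_{p}$, I would identify $Irr(G) \setminus \{1_{G}\}$ with $(\mathbb{Z}/p)^{*}$ via $\chi_{j}(k) = \zeta^{jk}$, so that $\sigma_{X}(k) = \sum_{j \in X} \zeta^{jk}$ for any $X \subseteq (\mathbb{Z}/p)^{*}$. The equation $\sigma_{X}(k) = \sigma_{X}(\ell)$ rewrites as a $\{-1,0,1\}$-valued linear relation among $\zeta, \zeta^{2}, \ldots, \zeta^{p-1}$; since $\Phi_{p}$ is irreducible over $\mathbb{Q}$, such a relation forces $kX = \ell X$ in $(\mathbb{Z}/p)^{*}$. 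Hence $X$ is a bad part iff its setwise stabilizer $\operatorname{Stab}(X) = \{a \in (\mathbb{Z}/p)^{*} : aX = X\}$ is trivial. Since $(\mathbb{Z}/p)^{*}$ is cyclic of order $p-1$, counting $H$-invariant subsets for each subgroup $H$ and applying M\"obius inversion on the subgroup lattice gives
\[
|BP(Z_{p})| \,=\, \sum_{d \mid p-1} \mu(d)\, 2^{(p-1)/d}.
\]
The dominant term is $2^{p-1}$ and the sum of the remaining terms is bounded in absolute value by $(p-1)\cdot 2^{(p-1)/2}$, so dividing by $2^{p-1} - 1$ yields $\beta_{n} = \alpha(Z_{p_{n}}) \to 1$ as $p_{n} \to \infty$.

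For $G = D_{2p}$ with $p \geq 5$, the nontrivial irreducibles are the sign character $\epsilon$ and the two-dimensional characters $\psi_{h}$ ($h = 1, \ldots, m$), with $\psi_{h}(r^{k}) = \zeta^{hk} + \zeta^{-hk}$ and $\psi_{h}(\mathrm{refl}) = 0$. I would write a candidate part as $X = X'' \cup \{\epsilon\}^{?}$, let $X' \subseteq \{1,\ldots,m\}$ index $X''$, and set $\widetilde{X}' = X' \cup (-X') \subseteq (\mathbb{Z}/p)^{*}$. The same cyclotomic argument then shows that $\sigma_{X}$ separates the rotation classes iff $\operatorname{Stab}(\widetilde{X}') = \{\pm 1\}$, and that $\sigma_{X}$ separates the reflection class from every rotation class iff $\widetilde{X}'$ is neither $\emptyset$ nor $(\mathbb{Z}/p)^{*}$. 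Both exceptional values of $\widetilde{X}'$ have stabilizer equal to the full group $(\mathbb{Z}/p)^{*}$ and are therefore already excluded by the first condition. M\"obius inversion over the cyclic group $(\mathbb{Z}/p)^{*}/\{\pm 1\}$ of order $m$ then yields
\[
|BP(D_{2p})| \,=\, 2 \sum_{d \mid m} \mu(d)\, 2^{m/d},
\]
with the factor $2$ accounting for the two choices of whether $\epsilon \in X$. Dividing by $2^{m+1} - 1 = 2^{(p+1)/2} - 1$ and taking $p \to \infty$ gives $\gamma_{n} \to 1$.

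The step I expect to require the most care is confirming that the reflection-class condition contributes no exclusions beyond those handled by the rotation-class stabilizer condition; this rests on the cyclotomic fact that for $T \subseteq (\mathbb{Z}/p)^{*}$, $\sum_{h \in T} \zeta^{h} = 0$ forces $T = \emptyset$ while $\sum_{h \in T} \zeta^{h} = -1$ forces $T = (\mathbb{Z}/p)^{*}$, so the only $\widetilde{X}'$ producing the forbidden values of the rotation-class sum are the two boundary cases that are already eliminated. Once this is in place, both asymptotic statements follow from the elementary estimate $\sum_{d \mid N,\, d > 1} |\mu(d)| \cdot 2^{N/d} \leq N \cdot 2^{N/2}$ applied with $N = p-1$ and $N = m$, respectively.
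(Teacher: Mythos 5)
The paper does not actually prove this statement: it appears only as Conjecture \ref{con1}, offered on the strength of the numerical evidence in Table \ref{3}, so there is no proof of record to compare yours against. Your argument is, as far as I can verify, a correct and complete proof that settles the conjecture affirmatively. The reduction of badness (distinctness of $\sigma_X$ on the non-identity classes, which is the operational definition used in Lemma \ref{lem} and Sub-Algorithm 1, and hence the one the paper's tables compute) to triviality of the setwise stabilizer of $X$ in $(\mathbb{Z}/p)^{*}$, respectively of $\widetilde{X}'$ modulo $\{\pm 1\}$, is sound: since the only $\mathbb{Q}$-linear relations among $1,\zeta,\dots,\zeta^{p-1}$ are multiples of $1+\zeta+\cdots+\zeta^{p-1}$, a relation with zero coefficient on $\zeta^{0}$ must vanish identically, which gives both the stabilizer criterion and your treatment of the reflection class (the forbidden sums $0$ and $-1$ occur only for $\widetilde{X}'=\emptyset$ and $\widetilde{X}'=(\mathbb{Z}/p)^{*}$, both already excluded by the stabilizer condition once $p\geq 5$ so that there are at least two rotation classes). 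The M\"obius counts are the standard necklace-type inversion over the subgroup lattice of a cyclic group, and they reproduce the paper's data exactly: $\sum_{d\mid p-1}\mu(d)2^{(p-1)/d}$ gives $2,12,54,990,4020$ bad parts for $p=3,5,7,11,13$, and $2\sum_{d\mid m}\mu(d)2^{m/d}$ gives $12,60,108,480,1008,4092$ for $D_{14},D_{22},D_{26},D_{34},D_{38},D_{46}$, matching Table 7 entry for entry; the limits then follow from your elementary bound on the subdominant terms. In a final write-up you should state explicitly the basis argument for cyclotomic integers at an odd prime, restrict the dihedral computation to $p\geq 5$ (harmless for the limit), and perhaps note that your closed forms in fact give the exact rate of convergence, $1-\beta_n,\,1-\gamma_n=O\bigl(p_n 2^{-(p_n-1)/2}\bigr)$, which is strictly more than the conjecture asks for.
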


In Table \ref{t4}, the percentage of bad parts in some groups of order $3p$, $3 \mid p - 1$ is given.  The calculations given in this table show that the Conjecture \ref{con1} is not valid for groups of order $3p$. Suppose $p$ and $q$ are primes such that  $q < p$ and $q | p - 1$. Let $T_{p,q}$ denote the non-abelian group of order $pq$ and  $q_n$ denote the $n$-th prime number with the property $3 | q_n -1$.
\begin{table}[htp]
\centering
\caption{Percentage of bad parts in $T_{p,3}$.}\label{t4}
\begin{tabular}{|c|c||c|c||c|c|}
\hline
p & $\alpha(G)$ & p & $\alpha(G)$& p & $\alpha(G)$ \\
\hline
$7$ & $\%26$ & $13$ & $\%38$ & $19$ & $\%42$\\
\hline
$31$ & $\%48$ & $37$ & $\%49$ & $43$ & $\%49.6$\\
\hline
\end{tabular}
\end{table}

 By the calculations in Table \ref{t4}, we offer the following conjecture:
\begin{conj}\label{con2}
If $\delta_n = \alpha(T_{q_n, 3})$, then $\lim_{n \rightarrow \infty}\delta_n = 0.5$.
\end{conj}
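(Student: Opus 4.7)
The plan is to analyze the supercharacter-theoretic structure of the Frobenius group $G = T_{p,3} = \langle a, h : a^p = h^3 = 1,\ hah^{-1} = a^r\rangle$, with $r\in(\mathbb{Z}/p)^\times$ of order $3$, and to reduce the conjecture to an asymptotic counting statement for Gauss periods. The character theory of $G$ is classical: there are exactly three linear characters --- the trivial $1_G$ and two characters $\chi,\bar\chi$ inflated from $G/K\cong Z_3$, where $K=\langle a\rangle$ --- together with $m:=(p-1)/3$ characters of degree $3$, each of the form $\psi_O=\mathrm{Ind}_K^G\lambda_s$ for a non-trivial linear character $\lambda_s$ of $K$, parametrised by the $m$ orbits $O$ of the $H$-action on the non-trivial characters of $K$. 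The conjugacy classes are $\{e\}$, two classes $Kh$ and $Kh^{-1}$ of size $p$, and $m$ classes of size $3$ contained in $K$, so $\kappa(G)=m+3$.

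I would then classify bad parts by writing $X\subseteq \mathrm{Irr}(G)\setminus\{1_G\}$ as $X=S\sqcup T$ with $S\subseteq\{\chi,\bar\chi\}$ and $T$ a subset of the degree-$3$ characters. Since degree-$3$ characters vanish on $h,h^{-1}$ and linear characters are trivial on $K$,
\[
\sigma_X(h)=\sum_{\psi\in S}\psi(h),\qquad \sigma_X(a^j)=|S|+3\sum_{\psi\in T}\psi(a^j).
\]
If $|S|\in\{0,2\}$ then $\sigma_X(h)=\sigma_X(h^{-1})$, so $X$ cannot be bad. If $|S|=1$, the values $\sigma_X(h)$ and $\sigma_X(h^{-1})$ are $\zeta_3,\zeta_3^{-1}$ in some order, hence distinct and lying in $\mathbb{Q}(\zeta_3)\setminus\mathbb{Q}$, while each $\sigma_X(a^j)\in\mathbb{Q}(\zeta_p)$; since $\gcd(3,p)=1$ forces $\mathbb{Q}(\zeta_3)\cap\mathbb{Q}(\zeta_p)=\mathbb{Q}$, the values on $\{h,h^{-1}\}$ automatically differ from every value on a $K$-class. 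Thus $X$ is bad if and only if $|S|=1$ and the $m$ values $\sigma_X(a^j)$ are pairwise distinct across the $K$-classes. Writing $N(p)$ for the number of $T\subseteq\{\psi_{O_1},\dots,\psi_{O_m}\}$ realising this distinctness, we get $|BP(G)|=2N(p)$, hence
\[
\alpha(G)=\frac{2N(p)}{2^{m+2}-1},
\]
so Conjecture \ref{con2} is equivalent to $N(p)/2^m\to 1$ as $p\to\infty$ through primes with $p\equiv 1\pmod 3$.

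To establish this asymptotic, I would identify both the $K$-classes and the degree-$3$ characters with the quotient group $Q:=(\mathbb{Z}/p)^\times/H$, noting that $\psi_O(a^{O'})=\eta(OO')$, where $\eta(O)=\sum_{x\in O}\zeta_p^{x}$ is a degree-$3$ Gauss period. A collision $\sigma_X(C_i)=\sigma_X(C_j)$ for distinct $K$-classes is then equivalent to the subset-sum relation $\sum_{O\in T}\bigl(\eta(OO_i)-\eta(OO_j)\bigr)=0$ in $\mathbb{Z}[\zeta_p]$. If for each of the $\binom{m}{2}$ pairs $(i,j)$ the number of subsets $T$ satisfying this relation can be bounded by $o(2^m/m^2)$, a union bound gives $2^m-N(p)=o(2^m)$, and together with the trivial bound $N(p)\le 2^m$ this yields $\alpha(G)\to 1/2$.

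The main obstacle is precisely this per-pair subset-sum estimate, which amounts to a Littlewood--Offord-type inequality for $\{0,1\}$-combinations of degree-$3$ Gauss periods. The natural line of attack is to invoke Weil/Deligne-style bounds on exponential sums to show that each coefficient vector $\bigl(\eta(OO_i)-\eta(OO_j)\bigr)_{O\in Q}$ contains an asymptotically large subfamily that is $\mathbb{Q}$-linearly independent, and then to apply a higher-dimensional Littlewood--Offord bound to convert this genericity into a strong Anti-Concentration estimate on $\{0,1\}^Q$. Establishing the required uniform linear-independence properties of families of Gauss periods as $p$ varies is the most delicate ingredient, and is where the rigorous proof of Conjecture \ref{con2} will ultimately rest.
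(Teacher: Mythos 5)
First, note that the paper does not prove this statement: it is stated purely as a conjecture supported by the computations reported in its tables, so there is no proof of record to compare yours against. That said, your reduction is correct and goes well beyond what the paper offers. The description of $Irr(T_{p,3})$ and $Con(T_{p,3})$, the splitting $X=S\sqcup T$, the observation that $|S|\in\{0,2\}$ forces $\sigma_X(h)=\sigma_X(h^{-1})$ (so such $X$ cannot be bad), the field-disjointness argument showing that for $|S|=1$ the values on the two classes outside $K$ are automatically distinct from each other and from all values on $K$-classes, and the resulting identity $|BP(G)|=2N(p)$ with $\alpha(G)=2N(p)/(2^{m+2}-1)$ are all sound; they reproduce the paper's data exactly ($p=7$: $N=2$, $|BP|=4$; $p=13$: $N=12$, $|BP|=24$; $p=19$: $N=54$, $|BP|=108$).

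The genuine gap is the final step, which you explicitly leave open, and the machinery you propose for it (Weil--Deligne bounds plus a higher-dimensional Littlewood--Offord inequality) is both unestablished and unnecessary. The point you are missing is that the degree-$3$ Gaussian periods $\eta(O)$, $O\in Q$, are linearly independent over $\mathbb{Q}$: they are sums of $\zeta_p^x$ over the $m$ pairwise disjoint cosets of $H$ in $(\mathbb{Z}/p)^{\times}$, and $\{\zeta_p^x\}_{x=1}^{p-1}$ is a $\mathbb{Q}$-basis of $\mathbb{Q}(\zeta_p)$. Consequently $\sum_{O\in T}\eta(OO_i)=\sum_{O'\in TO_i}\eta(O')$ has $\{0,1\}$-coordinates in this basis, so the collision $\sum_{O\in T}\bigl(\eta(OO_i)-\eta(OO_j)\bigr)=0$ holds if and only if $TO_i=TO_j$ as subsets of $Q$, i.e.\ if and only if $T$ is invariant under translation by $g=O_jO_i^{-1}\neq 1$ in the cyclic group $Q\cong Z_m$. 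The number of subsets of $Z_m$ invariant under translation by an element of order $d\geq 2$ is $2^{m/d}\leq 2^{m/2}$, whence $2^m-N(p)\leq (m-1)2^{m/2}=o(2^m)$, giving $N(p)/2^m\to 1$ and therefore $\alpha(T_{q_n,3})\to 1/2$. So your framework does lead to a proof of the conjecture, but only after replacing your proposed analytic endgame with this elementary linear-independence argument; as written, the proposal is a plan whose central estimate is not established.
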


Suppose  $Irr(Z_{7})$ = $\{1_{Z_7}, \chi_2, \ldots, \chi_{7}\}$ and $Con(Z_{7})$ = $\{e, x_2^{Z_{7}}, \ldots, x_{7}^{Z_{7}}\}$. The cyclic group $Z_7$ has exactly four supercharacter theories $ m(Z_7) $, $ M(Z_7) $, $ \mathcal{C}_1 = (\mathcal{X}_1, \mathcal{K}_1) $ and $ \mathcal{C}_2 = (\mathcal{X}_2, \mathcal{K}_2) $ such that

\begin{eqnarray*}
\mathcal{X}_1  &:=& \{\{1_{Z_7}\}, \{\chi_2, \chi_3, \chi_5 \}, \{\chi_4, \chi_6, \chi_7 \}\},\\
\mathcal{K}_1 &:=& \{\{ e \}, \{{x_2}^{Z_7}, {x_3}^{Z_7}, {x_5}^{Z_7}\}, \{{x_4}^{Z_7}, {x_6}^{Z_7}, {x_7}^{Z_7}\}\},\\
\mathcal{X}_2 &:=& \{\{ 1_{Z_7}\}, \{\chi_2, \chi_7 \}, \{\chi_3, \chi_6 \}, \{\chi_4, \chi_5\}\},\\
\mathcal{K}_2 &:=& \{\{ e \}, \{{x_2}^{Z_7}, {x_7}^{Z_7}\}, \{{x_3}^{Z_7}, {x_6}^{Z_7}\}, \{{x_4}^{Z_7}, {x_5}^{Z_7}\}\}.
\end{eqnarray*}

This shows that each part in $\mathcal{X}$ and $\mathcal{K}$ in a supercharacter theory $(\mathcal{X}, \mathcal{K})$ has size $1, 2, 3$ or $6$. On the other hand, if $p$ is prime and $d$ is the number of divisors of $p-1$, then by \cite[Table 1]{7}, the cyclic group $Z_p$ has exactly $d$ supercharacter theories. 
As a result, the following conjecture is suggested:

\begin{conj}
For each divisor $r$ of $p-1$, there exists only one supercharacter theory $(\mathcal{X}, \mathcal{K})$ of $Z_p$ such that the sizes of all 
non-trivial parts of $\mathcal{X}$ and $\mathcal{K}$ are equal to $r$.  Moreover, if we sort the conjugacy classes and irreducible characters  of $Z_p$ by ATLAS notations\cite{52}, then $\gamma_1(\mathcal{X}) = \gamma_2(\mathcal{K})$.

\end{conj}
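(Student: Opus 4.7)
The plan is to reduce the conjecture to Hendrickson's classification of supercharacter theories of $Z_p$ given in~\cite{7}: the supercharacter theories of $Z_p$ are in natural bijection with subgroups of $\operatorname{Aut}(Z_p) \cong Z_{p-1}$, and because $Z_{p-1}$ is cyclic, it has a unique subgroup $H_r$ of order $r$ for each divisor $r$ of $p-1$. This bijection already explains why the total count is $d$, and my aim is to track the orbit sizes and the indexing throughout.

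Concretely, I would fix a generator $g$ of $Z_p$ and a primitive $p$-th root of unity $\zeta$, and identify $Z_p \setminus \{e\}$ with $(\mathbb{Z}/p\mathbb{Z})^*$ via $g^j \leftrightarrow j$ and $\operatorname{Irr}(Z_p) \setminus \{1_{Z_p}\}$ with $(\mathbb{Z}/p\mathbb{Z})^*$ via $\chi_j \leftrightarrow j$, where $\chi_j(g) = \zeta^j$. Both identifications intertwine the $\operatorname{Aut}(Z_p)$-action, which on $(\mathbb{Z}/p\mathbb{Z})^*$ is multiplication. In Hendrickson's construction, the superclasses attached to a subgroup $H \le \operatorname{Aut}(Z_p)$ are the $H$-orbits on $Z_p$ and the supercharacter parts are the $H$-orbits on $\operatorname{Irr}(Z_p)$. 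For $H = H_r$, the regular action of $(\mathbb{Z}/p\mathbb{Z})^*$ on itself forces every non-trivial orbit to have size $|H_r| = r$, proving existence. Conversely, given any supercharacter theory $(\mathcal{X}, \mathcal{K})$ whose non-trivial parts all have size $r$, Hendrickson's bijection attaches to it some subgroup $H$, whose non-trivial orbits all have a common size equal to $|H|$; hence $|H| = r$, and $H = H_r$ by uniqueness of subgroups of a cyclic group.

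For the final assertion, the ATLAS ordering of $Z_p$ indexes the non-trivial conjugacy classes and non-trivial irreducible characters by a common sequence of exponents $(e_2, \ldots, e_p)$ in $(\mathbb{Z}/p\mathbb{Z})^*$: class $K_i$ is $\{g^{e_i}\}$ and character $\chi_i$ sends $g$ to $\zeta^{e_i}$. Under this common labeling, a subset $S \subseteq \{2,\ldots,p\}$ records an $H_r$-orbit on the class side if and only if it records the corresponding $H_r$-orbit on the character side, because the two $\operatorname{Aut}(Z_p)$-actions are literally the same action on $(\mathbb{Z}/p\mathbb{Z})^*$. Hence $\gamma_1(\mathcal{X}) = \gamma_2(\mathcal{K})$.

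The main obstacle I anticipate is not algebraic but conventional: I need to verify that the ATLAS orders the classes and irreducible characters of $Z_p$ via the same underlying exponent sequence, rather than (for instance) pairing a character with its complex conjugate or applying a Galois twist that would desynchronize the two indexings. Establishing this rigorously will likely require examining the ATLAS conventions for cyclic groups in detail and, if necessary, reformulating the conjecture to use the natural labeling implicit in the orbit description above.
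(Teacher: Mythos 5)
A preliminary point: the paper offers this statement only as a conjecture, supported by the explicit computation for $Z_7$ and by Hendrickson's count $|Sup(Z_p)|=\tau(p-1)$; it contains no proof, so there is no argument of the authors' to compare yours against. Your strategy for the first sentence is sound and, if written out, would actually settle that half of the conjecture. The cleanest way to close it is the counting step you gesture at: the $\tau(p-1)$ subgroups $H_r\le \operatorname{Aut}(Z_p)\cong Z_{p-1}$ yield $\tau(p-1)$ automorphic supercharacter theories that are pairwise distinct (their non-trivial parts have distinct sizes $r$, since the non-trivial $H_r$-orbits on $(\mathbb{Z}/p\mathbb{Z})^*$ are cosets of $H_r$), and Hendrickson's Table 1 --- which the paper itself cites --- says there are only $\tau(p-1)$ supercharacter theories in total; hence every supercharacter theory of $Z_p$ is automorphic and both existence and uniqueness follow. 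Do make that step explicit rather than invoking ``Hendrickson's bijection'' as attaching a subgroup to an arbitrary supercharacter theory: the assertion that \emph{every} supercharacter theory of $Z_p$ is an orbit theory is precisely the content that must be extracted from the classification (or derived from the count as above), not an automatic feature of the correspondence.

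The genuine gap is the ``Moreover'' clause, which you flag but do not close. The equality $\gamma_1(\mathcal{X})=\gamma_2(\mathcal{K})$ is not an algebraic property of $Z_p$; it is a statement about a labelling convention, and it holds exactly when the non-trivial classes and non-trivial characters are indexed by the same exponent sequence in $(\mathbb{Z}/p\mathbb{Z})^*$ (say $K_i=\{g^{e_i}\}$ and $\chi_i(g)=\zeta^{e_i}$ with the same $e_i$). A Galois reordering or complex conjugation applied to one indexing but not the other would desynchronize them and make the claim false, even though the underlying supercharacter theory is unchanged. Since the ATLAS does not actually print character tables of cyclic groups, ``ATLAS notation'' here can only mean the convention of the GAP character table library the authors computed with, and until that convention is verified (or the clause is restated relative to the natural exponent labelling, as you propose) the second sentence of the conjecture remains unproved in your write-up.
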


It is a well-known result in group theory that  for any positive integer $k$, there are finitely many number of non-isomorphic finite groups with exactly $k$ conjugacy classes. This number is denoted by $f(k)$. Suppose $\Gamma(k) = \{ G_1, G_2, \ldots, G_{f(k)}\}$ denotes a complete set of finite groups such that all members of $\Gamma(k)$ are mutually non-isomorphic and all of them have exactly $k$ conjugacy classes. The supercharacter theory form of $f(k)$ is defined as $n_1^{\alpha_1}n_2^{\alpha_2}\cdots n_s^{\alpha_s}$ where $\alpha_i$, $1 \leq i \leq s$, denotes the number of groups with exactly $k$ conjugacy classes containing $n_i$ supercharacter theories and $f(k) = \sum_{i=1}^s\alpha_i$.
The supercharacter theory form of groups with at most 14 conjugacy classes are recorded in Table 6. 

\begin{center}
TABLE 6. Supercharacter theory form of groups with $\kappa \leq 14$ conjugacy classes.
\end{center}
\begin{longtable}{l|l}
$\kappa$ & Supercharacter Theory Form\\
\hline
3 & $2^2$\\ \hline
4 & $3^3~5^1$\\ \hline
5 & $3^3~5^3~9^2 $\\ \hline
6 & $4^1~5^1~7^1~8^1~9^1~(15)^1~(18)^1~(20)^1$\\ \hline
7 & $3^1~4^1~5^1~7^3~8^2~(11)^1~(20)^3$\\ \hline
8 & $5^2~7^1~(10)^1~(11)^1~(12)^3~(13)^1~(14)^1~(15)^1~(16)^1~(18)^1~(19)^1~(22)^1~(23)^1~$\\
&$(25)^1~(28)^1~(54)^1~(100)^1~(110)^1$\\ \hline
9 & $3^1~7^3~9^1~(10)^1~(12)^1~(13)^1~(15)^1~(18)^1~(19)^1~(21)^1~(22)^2~(32)^2~(36)^1~$\\
&$(43)^1~(40)^1~(45)^3~(49)^1~(65)^1~(128)^2$\\ \hline
10 & $(5)^2~(10)^2~(11)^1~(13)^1~(14)^1~(15)^1~(16)^1~(23)^1~(25)^2~(23)^1~(24)^1~(28)^1~$\\
&$(32)^2~(34)^1~(35)^2~(44)^1~(51)^1~(52)^1$~$(57)^2$~$(58)^3$~$(64)^2$~$(80)^1$~$(83)^2$~$(165)^1$~\\
&$(215)^2$~$(623)^1$\\ \hline
11 & $4^2~8^2~(11)^4~(13)^4~(15)^1~(17)^1~(18)^2~(25)^1~(26)^1~(31)^1~(47)^3~(53)^2~(55)^1~$\\
&$(81)^1~(89)^2~(124)^1~(144)^3~(232)^1~(376)^2$ \\ \hline
12 & $5^1~7^1~(13)^2~(16)^1~(18)^2~(19)^2~(22)^3~(23)^2~(32)^2~(34)^1~(35)^1~(36)^1~(46)^1~$\\
& $(49)^1~(51)^1~(65)^1~(68)^1~(69)^1~(76)^3~(81)^2~(88)^2~(94)^1~(99)^2~(100)^1~(105)^1~$\\
&$(133)^4~(144)^1~(152)^1~(197)^1~(205)^1~(212)^1~(233)^1~(255)^1~(360)^1~(484)^1~$\\
&$(1058)^1$\\ \hline
13 & $3^1~6^1~9^2~(11)^1~(13)^2~(17)^1~(18)^1~(24)^2~(25)^2~(35)^1~(38)^2~(40)^2~(42)^1~$\\
&$(43)^1~(46)^2~(50)^1~
(53)^2~(71)^3~(72)^2~(81)^2~(89)^1~(102)^1~(110)^4~(129)^4~$\\
&$(132)^3~(138)^1~(175)^1~(313)^3~(325)^3$\\ \hline
14 & $(5)^2~(9)^1~(10)^1~(12)^1~(13)^1~(14)^1~(15)^2~(21)^1~(22)^2~(23)^1~(29)^1~(35)^2~$\\
&$(38)^1~(39)^1~(41)^1~(43)^1~(45)^3~(47)^1~(49)^1~(51)^1~(53)^1~(57)^1~(63)^2~(71)^1~$\\
&$(76)^1~(78)^2~(79)^1~(81)^2~(85)^1~(105)^1~(110)^2~(119)^1~(123)^1~(125)^1~(130)^1~$\\
&$(138)^1~(139)^1~(140)^2~(145)^1~(157)^1~(172)^2~(186)^2~(206)^1~(213)^3~(222)^1~$\\
&$(244)^2~(270)^1~(272)^2~(304)^2~(308)^2~(320)^1~(482)^1~(601)^2~(613)^2~(620)^3~$\\
&$(627)^1~(645)^3~(904)^1~(940)^3~(1048)^2~(1324)^3~(2093)^1~(29016)^1$\\ \hline
\end{longtable}

The following conjecture has been suggested by the calculations in Table 6.

\begin{conj}
The number of  supercharacter theories of all members of $\Gamma(k)$ are distinct if and only if $k=6$. In this case, all groups are  $Z_5$, $D_{14}$, $A_5$, $Z_5 : Z_4$, $Z_7 : Z_3$, $S_4$, $D_8$ and $Q_8$.
\end{conj}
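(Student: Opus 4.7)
The conjecture is a biconditional, and the plan is to handle the two implications separately, using the data assembled in Table~6 for the finite range and a separate argument for the tail $k \geq 15$.

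The forward direction, $k=6 \Rightarrow$ distinctness, is a finite verification. I would apply the Main Algorithm (whose correctness is guaranteed by \cite[Theorem~2.2(c)]{6}, as noted after its statement) to each of the eight groups in $\Gamma(6)$ and read off the resulting supercharacter theory counts; these are precisely the eight values $4,5,7,8,9,15,18,20$ recorded with exponent $1$ in the $\kappa=6$ row of Table~6, which are visibly pairwise distinct, so the implication follows by inspection.

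For the reverse direction, distinctness $\Rightarrow k=6$, the plan splits by the range of $k$. For $3 \leq k \leq 14$ with $k \neq 6$, Table~6 gives an immediate witness: every such row has supercharacter theory form $n_1^{\alpha_1}\cdots n_s^{\alpha_s}$ with at least one exponent $\alpha_i \geq 2$, so two non-isomorphic groups in $\Gamma(k)$ share the value $n_i$ of $|Sup(G)|$. The substantive obstacle is the range $k \geq 15$, which lies beyond the computations of this paper; there one would need to produce, for each such $k$, a collision inside $\Gamma(k)$, that is, two non-isomorphic groups with $k$ conjugacy classes and equal $|Sup|$. Promising sources of such collisions are (i) pairs of Frobenius groups whose parameters in Wynn's formula $1+\tau((p-1)/q)\tau(q-1)$ can be made to coincide while $\kappa$ stays fixed; (ii) pairs of groups that share a character table but not their group structure, in the spirit of the $D_8, Q_8$ collision already visible in $\Gamma(5)$; and (iii) direct products with a fixed small factor whose $|Sup|$-behavior can be controlled.

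The hard part will be this last step. The time complexity of the Main Algorithm grows like $O(n^3 B(n))$ in $n=\kappa(G)$, so brute-force extension of Table~6 only a few steps past $n=14$ is already infeasible, and a uniform collision-producing construction valid for every $k \geq 15$ is therefore essential. None of the standard constructions in the literature obviously produces such collisions uniformly, and locating one, or else disproving the conjecture by exhibiting a second value of $k$ with all $|Sup|$ values distinct, is where the real work lies.
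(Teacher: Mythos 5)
This statement is a conjecture, not a theorem: the paper offers no proof, only the remark that it ``has been suggested by the calculations in Table 6.'' So there is no proof of the paper's to compare against, and your plan correctly isolates what the paper actually establishes (the finite verification for $3 \leq k \leq 14$ via Tables 6 and 7) from what remains open (the tail $k \geq 15$). Your forward direction and your treatment of $3 \leq k \leq 14$, $k \neq 6$, reproduce exactly the evidence the paper has; your candidate collision mechanisms for $k \geq 15$ (Frobenius parameter coincidences in Wynn's formula, groups sharing a character table, direct products) are reasonable starting points but none is developed, so the genuine gap you name is real and is the entire mathematical content of the conjecture beyond the paper's computations. Neither you nor the paper closes it.

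One concrete error you should have caught in carrying out the finite verification: the second sentence of the conjecture lists the wrong groups. The groups $Z_5$, $D_{14}$, $A_5$, $Z_5 : Z_4$, $Z_7 : Z_3$, $S_4$, $D_8$, $Q_8$ each have exactly \emph{five} conjugacy classes, i.e.\ they are the members of $\Gamma(5)$, and by Table 7 their supercharacter theory counts are $3,3,3,5,5,5,9,9$ --- far from distinct (consistent with the $\kappa=5$ row $3^3\,5^3\,9^2$ of Table 6). The eight members of $\Gamma(6)$, whose counts $4,5,7,8,9,15,18,20$ are the distinct values you cite, are by Table 7 the groups with GAP identifiers $[168,42]$, $[18,1]$, $[6,2]$, $[36,9]$, $[12,1]$, $[12,4]$, $[72,41]$, $[18,4]$. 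Any proof (or even precise statement) of the conjecture must correct this list.
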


Vera--L\'{o}pez and his co-authors\cite{11, 12, 13} classified all finite groups containing up to $14$ conjugacy classes. We apply these classification theorems and our main algorithm to find all supercharacter theories of groups containing up to $14$ conjugacy classes. These  calculations are presented in Table 7. 

\begin{center}
TABLE 7. 
\end{center}
\begin{longtable}{c|c|c|c|c}
\hline 
{\textbar}Con(Group){\textbar} & Group ID & {\textbar}Sup(Group){\textbar} & {\textbar}BadParts{\textbar} & {\textbar}BadPartitionSets{\textbar} \\ \hline 
3 & [3,1] & 2 & 2 & 0 \\ \hline 
3 & [6,1] & 2 & 2 & 0 \\ \hline 
 \hline 
4 & [4,1] & 3 & 4 & 2 \\ \hline 
4 & [10,1] & 3 & 4 & 2 \\ \hline 
4 & [12,3] & 3 & 4 & 2 \\ \hline 
4 & [4,2] & 5 & 0 & 0 \\ \hline 
 \hline 
5 & [14,1] & 3 & 12 & 12 \\ \hline 
5 & [5,1] & 3 & 12 & 12 \\ \hline 
5 & [60,5] & 3 & 8 & 9 \\ \hline 
5 & [20,3] & 5 & 8 & 9 \\ \hline 
5 & [24,12] & 5 & 4 & 6 \\ \hline 
5 & [21,1] & 5 & 4 & 6 \\ \hline 
5 & [8,3] & 9 & 0 & 0 \\ \hline 
5 & [8,4] & 9 & 0 & 0 \\ \hline 
 \hline 
6 & [168,42] & 4 & 16 & 36 \\ \hline 
6 & [18,1] & 5 & 18 & 43 \\ \hline 
6 & [6,2] & 7 & 12 & 36 \\ \hline 
6 & [36,9] & 8 & 8 & 22 \\ \hline 
6 & [12,1] & 9 & 8 & 22 \\ \hline 
6 & [12,4] & 15 & 0 & 0 \\ \hline 
6 & [72,41] & 18 & 0 & 0 \\ \hline 
6 & [18,4] & 20 & 0 & 0 \\ \hline 
 \hline 
7 & [22,1] & 3 & 60 & 200 \\ \hline 
7 & [7,1] & 4 & 54 & 196 \\ \hline 
7 & [120,34] & 5 & 18 & 97 \\ \hline 
7 & [39,1] & 7 & 24 & 124 \\ \hline 
7 & [55,1] & 7 & 24 & 124 \\ \hline 
7 & [360,118] & 7 & 16 & 88 \\ \hline 
7 & [52,3] & 8 & 24 & 120 \\ \hline 
7 & [24,3] & 8 & 40 & 172 \\ \hline 
7 & [42,1] & 11 & 24 & 152 \\ \hline 
7 & [16,8] & 20 & 0 & 0 \\ \hline 
7 & [16,9] & 20 & 0 & 0 \\ \hline 
7 & [16,7] & 20 & 0 & 0 \\ \hline 
 \hline 
8 & [26,1] & 5 & 108 & 858 \\ \hline 
8 & [56,11] & 5 & 108 & 858 \\ \hline 
8 & [720,765] & 7 & 16 & 148 \\ \hline 
8 & [8,1] & 10 & 64 & 750 \\ \hline 
8 & [68,3] & 11 & 48 & 544 \\ \hline 
8 & [48,29] & 12 & 28 & 310 \\ \hline 
8 & [48,28] & 12 & 28 & 310 \\ \hline 
8 & [168,43] & 12 & 40 & 434 \\ \hline 
8 & [660,13] & 13 & 16 & 148 \\ \hline 
8 & [600,150] & 14 & 80 & 761 \\ \hline 
8 & [20,1] & 15 & 16 & 266 \\ \hline 
8 & [80,49] & 16 & 0 & 0 \\ \hline 
8 & [78,1] & 18 & 24 & 326 \\ \hline 
8 & [24,13] & 19 & 24 & 326 \\ \hline 
8 & [48,3] & 22 & 0 & 0 \\ \hline 
8 & [20,4] & 23 & 16 & 266 \\ \hline 
8 & [300,23] & 25 & 16 & 148 \\ \hline 
8 & [8,2] & 28 & 0 & 0 \\ \hline 
8 & [200,44] & 54 & 0 & 0 \\ \hline 
8 & [8,5] & 100 & 0 & 0 \\ \hline 
8 & [48,50] & 110 & 0 & 0 \\ \hline 
 \hline 
9 & g2 & 3 & 92 & 2392 \\ \hline 
9 & [504,156] & 7 & 144 & 3243 \\ \hline 
9 & [120,5] & 7 & 152 & 3640 \\ \hline 
9 & [9,1] & 7 & 168 & 3932 \\ \hline 
9 & [57,1] & 9 & 108 & 2916 \\ \hline 
9 & [336,208] & 10 & 48 & 1266 \\ \hline 
9 & [30,3] & 12 & 96 & 3466 \\ \hline 
9 & [1092,25] & 13 & 48 & 960 \\ \hline 
9 & [72,39] & 15 & 128 & 3634 \\ \hline 
9 & [114,1] & 18 & 72 & 2148 \\ \hline 
9 & [72,15] & 19 & 36 & 1536 \\ \hline 
9 & g1 & 21 & 56 & 1426 \\ \hline 
9 & [1176,215] & 22 & 80 & 2022 \\ \hline 
9 & [18,3] & 22 & 68 & 2108 \\ \hline 
9 & [60,7] & 32 & 16 & 336 \\ \hline 
9 & [960,11357] & 32 & 0 & 0 \\ \hline 
9 & [72,40] & 36 & 0 & 0 \\ \hline 
9 & [9,2] & 40 & 0 & 0 \\ \hline 
9 & [144,182] & 43 & 0 & 0 \\ \hline 
9 & [24,4] & 45 & 0 & 0 \\ \hline 
9 & [24,8] & 45 & 0 & 0 \\ \hline 
9 & [24,6] & 45 & 0 & 0 \\ \hline 
9 & [72,43] & 49 & 0 & 0 \\ \hline 
9 & [36,10] & 65 & 0 & 0 \\ \hline 
9 & [192,1023] & 128 & 0 & 0 \\ \hline 
9 & [192,1025] & 128 & 0 & 0 \\ \hline 
 \hline 
10 & g3 & 5 & 112 & 8192 \\ \hline 
10 & [34,1] & 5 & 480 & 21094 \\ \hline 
10 & [120,35] & 10 & 152 & 12390 \\ \hline 
10 & [10,2] & 10 & 376 & 20725 \\ \hline 
10 & g4 & 11 & 304 & 18784 \\ \hline 
10 & [100,3] & 13 & 240 & 15090 \\ \hline 
10 & [448,179] & 14 & 216 & 14994 \\ \hline 
10 & [28,1] & 15 & 192 & 14676 \\ \hline 
10 & [216,153] & 16 & 84 & 4320 \\ \hline 
10 & g5 & 23 & 0 & 0 \\ \hline 
10 & [28,3] & 23 & 144 & 13182 \\ \hline 
10 & [96,64] & 24 & 112 & 12432 \\ \hline 
10 & [136,12] & 25 & 128 & 11256 \\ \hline 
10 & [150,6] & 25 & 144 & 11700 \\ \hline 
10 & [40,3] & 28 & 128 & 11256 \\ \hline 
10 & [42,2] & 32 & 96 & 9896 \\ \hline 
10 & [48,30] & 32 & 128 & 11256 \\ \hline 
10 & [588,33] & 34 & 96 & 4584 \\ \hline 
10 & [54,6] & 35 & 136 & 10676 \\ \hline 
10 & [54,5] & 35 & 136 & 10676 \\ \hline 
10 & [96,71] & 44 & 48 & 2636 \\ \hline 
10 & [160,234] & 51 & 0 & 0 \\ \hline 
10 & [48,48] & 52 & 0 & 0 \\ \hline 
10 & [100,12] & 57 & 96 & 5056 \\ \hline 
10 & [16,6] & 57 & 0 & 0 \\ \hline 
10 & [96,72] & 58 & 48 & 2636 \\ \hline 
10 & [784,162] & 58 & 0 & 0 \\ \hline 
10 & [96,70] & 58 & 48 & 2636 \\ \hline 
10 & [40,12] & 64 & 0 & 0 \\ \hline 
10 & [96,227] & 64 & 0 & 0 \\ \hline 
10 & [54,8] & 80 & 0 & 0 \\ \hline 
10 & [16,3] & 83 & 0 & 0 \\ \hline 
10 & [16,4] & 83 & 0 & 0 \\ \hline 
10 & [16,13] & 165 & 0 & 0 \\ \hline 
10 & [16,12] & 215 & 0 & 0 \\ \hline 
10 & [16,11] & 215 & 0 & 0 \\ \hline 
10 & [100,11] & 623 & 0 & 0 \\ \hline 
 \hline 
11 & [38,1] & 4 & 1008 & 115959 \\ \hline 
11 & [11,1] & 4 & 990 & 115921 \\ \hline 
11 & [720,763] & 8 & 168 & 34604 \\ \hline 
11 & [116,3] & 8 & 504 & 88928 \\ \hline 
11 & g7 & 11 & 288 & 34056 \\ \hline 
11 & [1344,814] & 11 & 176 & 21962 \\ \hline 
11 & [1344,11686] & 11 & 176 & 21962 \\ \hline 
11 & [1512,779] & 11 & 496 & 75799 \\ \hline 
11 & [75,2] & 13 & 256 & 40208 \\ \hline 
11 & [336,114] & 13 & 312 & 53894 \\ \hline 
11 & [155,1] & 13 & 648 & 104700 \\ \hline 
11 & [203,1] & 13 & 648 & 104700 \\ \hline 
11 & [110,1] & 15 & 752 & 114070 \\ \hline 
11 & [171,3] & 17 & 336 & 74820 \\ \hline 
11 & [720,764] & 18 & 224 & 45882 \\ \hline 
11 & [186,1] & 18 & 360 & 75020 \\ \hline 
11 & g6 & 25 & 192 & 23424 \\ \hline 
11 & [432,734] & 26 & 84 & 12306 \\ \hline 
11 & [320,1635] & 31 & 128 & 21560 \\ \hline 
11 & [32,19] & 47 & 96 & 6688 \\ \hline 
11 & [32,20] & 47 & 96 & 6688 \\ \hline 
11 & [32,18] & 47 & 96 & 6688 \\ \hline 
11 & [192,184] & 53 & 256 & 61636 \\ \hline 
11 & [192,185] & 53 & 256 & 61636 \\ \hline 
11 & [200,40] & 55 & 0 & 0 \\ \hline 
11 & [392,38] & 81 & 0 & 0 \\ \hline 
11 & [27,4] & 89 & 0 & 0 \\ \hline 
11 & [27,3] & 89 & 0 & 0 \\ \hline 
11 & [108,17] & 124 & 0 & 0 \\ \hline 
11 & [32,6] & 144 & 0 & 0 \\ \hline 
11 & [32,7] & 144 & 0 & 0 \\ \hline 
11 & [32,8] & 144 & 0 & 0 \\ \hline 
11 & [96,204] & 232 & 0 & 0 \\ \hline 
11 & [32,43] & 376 & 0 & 0 \\ \hline 
11 & [32,44] & 376 & 0 & 0 \\ \hline 
 \hline 
12 & g14 & 5 & 288 & 58212 \\ \hline 
12 & g11 & 7 & 656 & 381020 \\ \hline 
12 & [336,209] & 13 & 584 & 358258 \\ \hline 
12 & [42,5] & 13 & 1320 & 668439 \\ \hline 
12 & g12 & 16 & 288 & 93764 \\ \hline 
12 & [360,120] & 18 & 76 & 31298 \\ \hline 
12 & g13 & 18 & 912 & 417606 \\ \hline 
12 & [240,90] & 19 & 192 & 59192 \\ \hline 
12 & [240,89] & 19 & 192 & 59192 \\ \hline 
12 & [84,11] & 22 & 696 & 425118 \\ \hline 
12 & g8 & 22 & 1200 & 521630 \\ \hline 
12 & [36,3] & 22 & 1248 & 591102 \\ \hline 
12 & [1920,240993] & 23 & 164 & 48724 \\ \hline 
12 & [960,11358] & 23 & 128 & 61560 \\ \hline 
12 & [222,1] & 32 & 648 & 440232 \\ \hline 
12 & [12,2] & 32 & 576 & 575908 \\ \hline 
12 & [3420,144] & 34 & 192 & 91344 \\ \hline 
12 & [36,1] & 35 & 480 & 416154 \\ \hline 
12 & [30,2] & 36 & 760 & 535850 \\ \hline 
12 & [24,1] & 46 & 256 & 346836 \\ \hline 
12 & [72,19] & 49 & 512 & 443456 \\ \hline 
12 & [36,4] & 51 & 168 & 101808 \\ \hline 
12 & [96,3] & 65 & 0 & 0 \\ \hline 
12 & [168,49] & 68 & 288 & 109428 \\ \hline 
12 & [126,9] & 69 & 320 & 117640 \\ \hline 
12 & g10 & 76 & 256 & 119632 \\ \hline 
12 & [36,11] & 76 & 0 & 0 \\ \hline 
12 & [12,5] & 76 & 144 & 120036 \\ \hline 
12 & [108,37] & 81 & 64 & 8112 \\ \hline 
12 & [72,44] & 81 & 136 & 69676 \\ \hline 
12 & [384,18134] & 88 & 0 & 0 \\ \hline 
12 & [384,18135] & 88 & 0 & 0 \\ \hline 
12 & [60,8] & 94 & 72 & 74496 \\ \hline 
12 & [384,592] & 99 & 0 & 0 \\ \hline 
12 & [384,591] & 99 & 0 & 0 \\ \hline 
12 & [24,5] & 100 & 0 & 0 \\ \hline 
12 & [72,45] & 105 & 128 & 86112 \\ \hline 
12 & [48,15] & 133 & 0 & 0 \\ \hline 
12 & [48,16] & 133 & 0 & 0 \\ \hline 
12 & [48,18] & 133 & 0 & 0 \\ \hline 
12 & [48,17] & 133 & 0 & 0 \\ \hline 
12 & [216,161] & 144 & 0 & 0 \\ \hline 
12 & [24,7] & 152 & 0 & 0 \\ \hline 
12 & [36,7] & 197 & 0 & 0 \\ \hline 
12 & [96,203] & 205 & 0 & 0 \\ \hline 
12 & [144,120] & 212 & 0 & 0 \\ \hline 
12 & [1620,419] & 233 & 0 & 0 \\ \hline 
12 & [36,13] & 255 & 0 & 0 \\ \hline 
12 & [24,14] & 360 & 0 & 0 \\ \hline 
12 & [144,187] & 484 & 0 & 0 \\ \hline 
12 & [1296,3523] & 1058 & 0 & 0 \\ \hline 
 \hline 
13 & [46,1] & 3 & 4092 & 4213594 \\ \hline 
13 & [13,1] & 6 & 4020 & 4213372 \\ \hline 
13 & [93,1] & 9 & 1980 & 3367930 \\ \hline 
13 & [253,1] & 9 & 1980 & 3367930 \\ \hline 
13 & [148,3] & 11 & 2016 & 3368736 \\ \hline 
13 & [205,1] & 13 & 2880 & 3960896 \\ \hline 
13 & [150,5] & 13 & 2552 & 4109564 \\ \hline 
13 & [301,1] & 17 & 2916 & 4035570 \\ \hline 
13 & [258,1] & 18 & 1512 & 3012604 \\ \hline 
13 & [720,409] & 24 & 432 & 592604 \\ \hline 
13 & [333,3] & 24 & 2016 & 3571488 \\ \hline 
13 & [310,1] & 25 & 2256 & 3196080 \\ \hline 
13 & [328,12] & 25 & 1920 & 3480620 \\ \hline 
13 & [160,199] & 35 & 0 & 0 \\ \hline 
13 & [324,160] & 38 & 528 & 882360 \\ \hline 
13 & [1320,133] & 38 & 400 & 1169056 \\ \hline 
13 & [1944,2290] & 40 & 1044 & 535050 \\ \hline 
13 & [1944,2289] & 40 & 1044 & 535050 \\ \hline 
13 & [1440,5841] & 42 & 0 & 0 \\ \hline 
13 & [120,38] & 43 & 1040 & 2034000 \\ \hline 
13 & [216,86] & 46 & 896 & 2642808 \\ \hline 
13 & [156,7] & 46 & 1152 & 3635061 \\ \hline 
13 & [1053,51] & 50 & 1008 & 1493856 \\ \hline 
13 & [576,8652] & 53 & 608 & 761056 \\ \hline 
13 & [100,10] & 53 & 896 & 1241736 \\ \hline 
13 & [40,6] & 71 & 288 & 259112 \\ \hline 
13 & [40,4] & 71 & 288 & 259112 \\ \hline 
13 & [40,8] & 71 & 288 & 259112 \\ \hline 
13 & [960,11359] & 72 & 384 & 609896 \\ \hline 
13 & [600,148] & 72 & 512 & 2172408 \\ \hline 
13 & [162,15] & 81 & 1104 & 1312350 \\ \hline 
13 & [162,13] & 81 & 1104 & 1312350 \\ \hline 
13 & [216,87] & 89 & 0 & 0 \\ \hline 
13 & [162,11] & 102 & 816 & 919596 \\ \hline 
13 & [1000,86] & 110 & 0 & 0 \\ \hline 
13 & [96,190] & 110 & 0 & 0 \\ \hline 
13 & [96,193] & 110 & 0 & 0 \\ \hline 
13 & [96,191] & 110 & 0 & 0 \\ \hline 
13 & [192,1493] & 129 & 0 & 0 \\ \hline 
13 & [192,1491] & 129 & 0 & 0 \\ \hline 
13 & [192,1494] & 129 & 0 & 0 \\ \hline 
13 & [192,1492] & 129 & 0 & 0 \\ \hline 
13 & [162,22] & 132 & 0 & 0 \\ \hline 
13 & [162,20] & 132 & 0 & 0 \\ \hline 
13 & [162,21] & 132 & 0 & 0 \\ \hline 
13 & [162,19] & 138 & 0 & 0 \\ \hline 
13 & [400,206] & 175 & 64 & 51232 \\ \hline 
13 & [64,35] & 313 & 128 & 102200 \\ \hline 
13 & [64,33] & 313 & 128 & 102200 \\ \hline 
13 & [64,36] & 313 & 128 & 102200 \\ \hline 
13 & [64,32] & 325 & 0 & 0 \\ \hline 
13 & [64,34] & 325 & 0 & 0 \\ \hline 
13 & [64,37] & 325 & 0 & 0 \\ \hline 
 \hline 
14 & g20 & 5 & 1752 & 5731764 \\ \hline 
14 & g17 & 5 & 2632 & 13381474 \\ \hline 
14 & g22 & 9 & 2388 & 8785964 \\ \hline 
14 & [50,1] & 10 & 6160 & 27476909 \\ \hline 
14 & g23 & 12 & 1328 & 4311840 \\ \hline 
14 & [14,2] & 13 & 7236 & 27615724 \\ \hline 
14 & [164,3] & 14 & 3960 & 22166120 \\ \hline 
14 & g15 & 15 & 3424 & 16996376 \\ \hline 
14 & [44,1] & 15 & 3840 & 22147610 \\ \hline 
14 & g21 & 21 & 0 & 0 \\ \hline 
14 & [720,766] & 22 & 1112 & 6206500 \\ \hline 
14 & [240,91] & 22 & 1824 & 15390702 \\ \hline 
14 & [44,3] & 23 & 2960 & 20070440 \\ \hline 
14 & [294,1] & 29 & 3024 & 19661348 \\ \hline 
14 & g24 & 35 & 0 & 0 \\ \hline 
14 & [410,1] & 35 & 4512 & 21859208 \\ \hline 
14 & [240,189] & 38 & 96 & 639362 \\ \hline 
14 & [1344,816] & 39 & 2112 & 15007172 \\ \hline 
14 & g18 & 41 & 0 & 0 \\ \hline 
14 & [108,15] & 43 & 1728 & 12086940 \\ \hline 
14 & [110,2] & 45 & 3008 & 19628768 \\ \hline 
14 & [78,2] & 45 & 2400 & 18340208 \\ \hline 
14 & [392,36] & 45 & 3456 & 22890564 \\ \hline 
14 & [1920,241001] & 47 & 0 & 0 \\ \hline 
14 & [104,3] & 49 & 3072 & 22597332 \\ \hline 
14 & [48,33] & 51 & 2272 & 20063952 \\ \hline 
14 & [2420,43] & 53 & 960 & 4740580 \\ \hline 
14 & g19 & 57 & 1120 & 3289320 \\ \hline 
14 & [480,1188] & 63 & 1520 & 8453444 \\ \hline 
14 & [648,703] & 63 & 0 & 0 \\ \hline 
14 & [288,1025] & 71 & 1188 & 4267596 \\ \hline 
14 & [300,25] & 76 & 608 & 2367488 \\ \hline 
14 & g16 & 78 & 2880 & 8390856 \\ \hline 
14 & [300,24] & 78 & 1152 & 10776156 \\ \hline 
14 & [84,1] & 79 & 1152 & 10776156 \\ \hline 
14 & [200,43] & 81 & 1184 & 4567684 \\ \hline 
14 & [500,21] & 81 & 1792 & 7771408 \\ \hline 
14 & [48,32] & 85 & 1088 & 13808484 \\ \hline 
14 & [104,12] & 105 & 576 & 4187988 \\ \hline 
14 & [320,1582] & 110 & 0 & 0 \\ \hline 
14 & [320,1581] & 110 & 0 & 0 \\ \hline 
14 & [1920,241000] & 119 & 0 & 0 \\ \hline 
14 & [432,520] & 123 & 96 & 801624 \\ \hline 
14 & g9 & 125 & 0 & 0 \\ \hline 
14 & [384,5] & 130 & 0 & 0 \\ \hline 
14 & [1280,1116311] & 138 & 768 & 2059168 \\ \hline 
14 & [240,192] & 139 & 768 & 3811584 \\ \hline 
14 & [384,4] & 140 & 0 & 0 \\ \hline 
14 & [384,6] & 140 & 0 & 0 \\ \hline 
14 & [294,14] & 145 & 1296 & 5737440 \\ \hline 
14 & [84,7] & 157 & 288 & 2705980 \\ \hline 
14 & [192,955] & 172 & 0 & 0 \\ \hline 
14 & [192,956] & 172 & 0 & 0 \\ \hline 
14 & [1280,1116310] & 186 & 0 & 0 \\ \hline 
14 & [1280,1116312] & 186 & 0 & 0 \\ \hline 
14 & [32,15] & 206 & 1024 & 6202512 \\ \hline 
14 & [96,195] & 213 & 64 & 109120 \\ \hline 
14 & [96,185] & 213 & 64 & 109120 \\ \hline 
14 & [96,187] & 213 & 64 & 109120 \\ \hline 
14 & [32,11] & 222 & 256 & 237632 \\ \hline 
14 & [80,29] & 244 & 0 & 0 \\ \hline 
14 & [80,33] & 244 & 0 & 0 \\ \hline 
14 & [288,1026] & 270 & 0 & 0 \\ \hline 
14 & [32,10] & 272 & 0 & 0 \\ \hline 
14 & [32,9] & 272 & 0 & 0 \\ \hline 
14 & [32,14] & 304 & 1024 & 6202512 \\ \hline 
14 & [32,13] & 304 & 1024 & 6202512 \\ \hline 
14 & [80,31] & 308 & 0 & 0 \\ \hline 
14 & [80,34] & 308 & 0 & 0 \\ \hline 
14 & [50,4] & 320 & 0 & 0 \\ \hline 
14 & [32,42] & 482 & 0 & 0 \\ \hline 
14 & [128,145] & 601 & 256 & 606592 \\ \hline 
14 & [128,144] & 601 & 256 & 606592 \\ \hline 
14 & [128,138] & 613 & 0 & 0 \\ \hline 
14 & [128,139] & 613 & 0 & 0 \\ \hline 
14 & [32,41] & 620 & 0 & 0 \\ \hline 
14 & [32,40] & 620 & 0 & 0 \\ \hline 
14 & [32,39] & 620 & 0 & 0 \\ \hline 
14 & [200,42] & 627 & 0 & 0 \\ \hline 
14 & [384,5871] & 645 & 0 & 0 \\ \hline 
14 & [384,5868] & 645 & 0 & 0 \\ \hline 
14 & [384,5870] & 645 & 0 & 0 \\ \hline 
14 & [32,33] & 904 & 0 & 0 \\ \hline 
14 & [32,32] & 940 & 0 & 0 \\ \hline 
14 & [32,30] & 940 & 0 & 0 \\ \hline 
14 & [32,31] & 940 & 0 & 0 \\ \hline 
14 & [32,29] & 1048 & 0 & 0 \\ \hline 
14 & [32,28] & 1048 & 0 & 0 \\ \hline 
14 & [32,27] & 1324 & 0 & 0 \\ \hline 
14 & [32,35] & 1324 & 0 & 0 \\ \hline 
14 & [32,34] & 1324 & 0 & 0 \\ \hline 
14 & [500,23] & 2093 & 0 & 0 \\ \hline 
14 & [294,13] & 29016 & 0 & 0 \\ \hline 
\end{longtable}

\vskip 3mm

\noindent{\textbf{Acknowledgment.}} The authors are indebted to professor Thomas Breuer for some critical discussion on primitive groups and the method he has suggested for introducing them to GAP.


\begin{thebibliography}{99}
\bibitem{2} C. A. M. Andr\'{e}, Basic characters of the unitriangular group, \textit{J. Algebra} \textbf{175} (1995) 287--319.

\bibitem{3} C. A. M. Andr\'{e}, Irreducible characters of finite algebra groups, \textit{Matrices and Group Representations} (\textit{Coimbra, 1998}) Textos Mat. Ser B 19 (1999) 65--80.

\bibitem{4} C. A. M. Andr\'{e}, Basic characters of the unitriangular group (for arbitrary primes), \textit{Proc. Amer. Math Soc.} \textbf{130} (2002) 1943--1954.

\bibitem{1} A. R. Ashrafi and F. Koorepazan-Moftakhar, Towards the classification of finite simple groups with exactly three or four supercharacter theories, \textit{Asian-European J. Math.}, DOI:10.1142/S1793557118500961.

\bibitem{5} S. Burkett, J. Lamar,  M. L. Lewis and C. Wynn, Groups with exactly two supercharacter theories, \textit{Comm. Algebra} \textbf{45} (2017) 977--982.

\bibitem{52} J. H. Conway, R. T. Curtis, S. P. Norton, R. A. Parker and R. A. Wilson, \textit{ATLAS of Finite Groups: Maximal Subgroups and Ordinary Characters for Simple Groups}, With computational assistance from J. G. Thackray, Oxford University Press, Eynsham, 1985.

\bibitem{55} T. H. Cormen, C. E. Leiserson, R. Rivest, C. Stein, \textit{Introduction to Algorithms}, MIT press, 1990.

\bibitem{6}  P. Diaconis and I. M. Isaacs, Supercharacters and superclasses for algebra groups. \textit{Trans. Amer. Math. Soc.} \textbf{360} (2008) 2359--2392.

\bibitem{er} M. C. Er, A fast algorithm for generating set partitions, \textit{Comput. J.} \textbf{31}(3) (1988) 283--284.

\bibitem{7} A. O. F. Hendrickson, \textit{Supercharacter Theories of Cyclic p-Groups}, PhD Thesis, University of Wisconsin, 2008.

\bibitem{8} I. M. Isaacs, \textit{Character Theory of Finite Groups}, Dover, New York, 1994.


\bibitem{999} G. Kilibarda and V. Jovovi\'c, Antichains of multisets, \textit{J. Integer Seq.} \textbf{7}(1) (2004) Article 04.1.5, 15 pp.

\bibitem{99} A. Nayak and I. Stojmenovi\'{c}, \textit{Handbook of Applied Algorithms: Solving Scientific, Engineering and Practical Problems},  John Wiley $\&$ Sons, Inc., 2008.

\bibitem{oeis} N. J. A. Sloane, The On-Line Encyclopedia of Integer Sequences, 2008. Sequence A060719: https://oeis.org/A060719.

\bibitem{St} B. Steinberg, \textit{Representation Theory of Finite Groups: An Introductory Approach},
Springer-Verlag, New York, 2012.

\bibitem{10} The GAP Team, \textit{Group, GAP-Groups, Algorithms, and Programming}, Version 4.5.5, 2012, http://www.gap-system.org.

\bibitem{semba} I. Semba, An efficient algorithm for generating all partitions of the set $\{1, 2, \ldots, n\}$, \textit{J. Info. Proc.} \textbf{7} (1984) 41--42.

\bibitem{11} A. Vera--L\'{o}pez and J. Sangroniz, The finite groups with thirteen and fourteen conjugacy classes. \textit{Math. Nachr.} \textbf{280} (56) (2007) 676--694.

\bibitem{12} A. Vera--L\'{o}pez and J. Vera--L\'{o}pez, Classification of finite groups according to the number of conjugacy classes II, \textit{Israel J. Math.} \textbf{56} (1986) 188--221.

\bibitem{13} A. Vera--L\'{o}pez and J. Vera--L\'{o}pez, Classification of finite groups according to the number of conjugacy classes, \textit{Israel J. Math.} \textbf{51} (1985) 305--338.

\bibitem{14}  C. W. Wynn, \textit{Supercharacter Theories of Camina Pairs}, PhD thesis, Kent State University, 2017.
\end{thebibliography}
\end{document}